\newtheorem{lemma}{Lemma}
\newtheorem{algorithm}{Algorithm}
\newtheorem{prop}{Proposition}
\newtheorem{thm}{Theorem}
\def \n{\Vert}
\newcommand{\E}{\mathop{\mathbb{E}}}
\newcommand{\dist}{\mathop{\mathrm{dist}}}
\renewcommand{\P}{\mathop{\mathbb{P}}}
\def\R{{\mathbb{R}}}
\def\N{{\mathbb{N}}}
\def\|{\,|\,}
\def\bn#1\en{\begin{align*}#1\end{align*}}
\def\bnn#1\enn{\begin{align}#1\end{align}}
\title{Optimal worst-risk minimization in structural equation models with random coefficients}
\date{}
\author{Philip Kennerberg and Ernst C. Wit}
\begin{document} 
\maketitle

\begin{abstract}
The insight that causal parameters are particularly suitable for out-of-sample prediction has sparked a lot development of causal-like predictors. However, the connection with strict causal targets, has limited the development with good risk minimization properties, but without a direct causal interpretation. In this manuscript we derive the optimal out-of-sample risk minimizing predictor of a certain target $Y$ in a non-linear system $(X,Y)$ that has been trained in several within-sample environments. We consider data from an observation environment, and several shifted environments. Each environment corresponds to a structural equation model (SEM), with random coefficients and with its own shift and noise vector, both in $L^2$. Unlike previous approaches, we also allow shifts in the target value. We define a sieve of out-of-sample environments, consisting of all shifts $\tilde{A}$ that are at most $\gamma$ times as strong as any weighted average of the observed shift vectors. For each $\beta\in\R^p$ we show that the supremum of the risk functions $R_{\tilde{A}}(\beta)$ has a worst-risk decomposition into a (positive) non-linear combination of risk functions, depending on $\gamma$. We then define the set $\mathcal{B}_\gamma$, as minimizers of this risk. The main result of the paper is that there is a unique minimizer ($|\mathcal{B}_\gamma|=1$) that can be consistently estimated by an explicit estimator, outside a set of zero Lebesgue measure in the parameter space. A practical obstacle for the initial method of estimation is that it involves the solution of a general degree polynomials. Therefore, we prove that an approximate estimator using the bisection method is also consistent.
\end{abstract}

\section{Introduction}

Minimization of prediction error is a standard way for evaluating models. It is used in cross-validation techniques \citep{laan2006cross, van2007super}, in information criteria, such as the AIC \citep{akaike1974new} or Mallow's Cp \citep{gilmour1996interpretation}. All of these methods assume that the circumstances between the observed training data and future test data do not change, i.e., the underlying distribution is entirely reflected in the sampling of the training data. This is often not the case. Either the sampling may consist of a particular subpopulation of the full population of interest, or there may be a temporal separation between the training of the model and its application that may have been punctuated by external shocks, as for example is a typical scenario in economics and finance \citep{kremer2018risk}.

In the field of statistics, risk minimization has received renewed attention in the past decade. 
\cite{icp} introduced invariant causal prediction as a way for predicting the outcome of a causal relationship between variables while maintaining its validity across different environments or contexts. It involves identifying causal relationships that are robust and invariant, meaning they hold true even when the conditions or settings change. \cite{causaldantzig} proposed the Causal Dantzig as a method for characterizing causal effects in observational studies by formulating it as a linear optimization problem. The empirical version of the Dantzig selector used idea of sparsity, assuming that only a few covariates have a direct causal impact on the outcome variable. By minimizing a penalized loss function subject to a set of linear constraints, Causal Dantzig selects a subset of relevant covariates and estimates their causal effects while controlling for confounding variables. 

\cite{Rot} proposed anchor regression as a statistical method for estimating the causal effect of a treatment or intervention on an outcome variable by accounting for potential confounding variables. It involves selecting a subset of covariates, known as \emph{anchor variables}, that are strongly associated with both the treatment and outcome variables. By regressing the outcome variable on the treatment and anchor variables, anchor regression helps mitigate bias caused by confounding factors and provides a more accurate estimation of the treatment effect. It serves as a useful tool in causal inference and allows researchers to make informed decisions about the effectiveness of interventions while controlling for potential confounding variables. \cite{kania2022causal} introduced a related approach, called \emph{causal regularization}. This method does not require explicit information on the auxilary variables, and which comes with strong out-of-sample risk guarantees. \cite{shen2023causality} propose \emph{distributional robustness via invariant gradients} (DRIG), a method that exploits general additive interventions in training data for robust predictions against unseen interventions. They establish robustness guarantees of DRIG under a linear structural causal model.

One limitation of previously mentioned methods is that they typically assume very restrictive settings, such as a linear SEM structure, exactly two observational environments or no intervention on the target variable. The aim of this manuscript is to relax these assumptions. In section~\ref{sec:outofsamplerisk} we introduce the multi-environment setting, where each environment corresponds to a non-linear structural equation model (SEM) with its own $L^2$ additive shift and $L^2$ additive noise vector. In section~\ref{sec:optimalCR}, we define the space of all out-of-sample environments $C^\gamma$, that correspond to shifts that are at most $\gamma$ times as strong as any weighted average of the observed shift vectors. For each $\beta\in\R^p$ we show that if we consider the supremum of the risk functions $R_{\tilde{A}}(\beta)$ over $\tilde{A}\in C^\gamma$ then this supremum has a worst-risk decomposition into a (positive) linear combination of risk functions, depending on $\gamma$. Analogous to \cite{kania2022causal}, we then define the risk minimizer, $\mathcal{B}_\gamma$, as the set of arguments $\beta$ that minimizes this worst risk. In section~\ref{sec:estimation} we show that there is a unique minimizer ($|\mathcal{B}_\gamma|=1$) and that this minimizer can be consistently estimated with an explicit estimator (that by-passes the issue of having an unbounded search-space) outside a set of zero Lebesgue measure in the parameter space. A practical obstacle for such estimation is that it will involve solution of a general degree polynomial. Therefore we also prove that an approximate plug-in estimator using the bisection method is also consistent. An interesting by-product of the proof is that plug-in estimation of the argmin of the maxima of a finite set of quadratic risk functions is consistent outside a set of zero Lebesgue measure in the parameter space. 


\section{Structural equation model with random coefficients}
\label{sec:outofsamplerisk}
Assume we are given a probability space $\left(\Omega,\mathcal{F},\P \right)$. For $1\le i\le k$, $k\in\N$, let $Y^{A_i}\in\R$, $X^{A_i}\in\R^p$ be random variables and vectors respectively on this space that are solutions to the following structural equations, 
\begin{equation}\label{SEMA}
	\begin{bmatrix}
		Y^{A_i}\\
		X^{A_i}
	\end{bmatrix} = 
	B(\omega)\cdot\begin{bmatrix}
		
		Y^{A_i}\\
		X^{A_i}
	\end{bmatrix}
	+\epsilon_{A_i}
	+
	A_i
\end{equation}
where $B(\omega)$ is a random real-valued $(p+1)\times(p+1)$ matrix such that $I-B$ is full rank a.s., the components of $A_i\in\R^{p+1}$, $\epsilon_{A_i}\in\R^{p+1}$ are in $L^2(\P)$ for $1\le i\le k$. Note that $B$ is the same random matrix for all equation systems. We will refer to these $k$ equation systems as environments. Stochastically, the roles of $X^{.}$ and $Y^{.}$ are completely identical, but our prediction focus is on the \emph{target} $Y^.$. The random vector $A_i\in \R^{p+1}$ is called the shift corresponding to environment $i$. Since $I-B$ has full rank a.s., $X^{A_i}$ and $Y^{A_i}$ have unique solutions. We also consider the observational (shift free) environment 
\begin{equation}\label{SEMO}
	\begin{bmatrix}
		Y^{O}\\
		X^{O}
	\end{bmatrix} = 
	B(\omega)\cdot\begin{bmatrix}
		Y^{O}\\
		X^{O}
	\end{bmatrix}
	+\epsilon_{O}.
\end{equation}
We assume that $\epsilon_O$ and $B$ have the same joint law as $\epsilon_{A_i}$ and $B$ for all $1\le i\le k$. Denote $\sigma(B)$ as the sigma algebra generated by the entries of $B$ and let $\E\left[.\|B\right]$ denote conditional expectation with respect to $\sigma(B)$. We also assume that the noise terms are uncorrelated with the shifts given $B$ i.e. $\E\left[\epsilon_{A_i}A_i^T\| B\right]=0$ a.s..

\subsection{Non-linear interpretation of the random transfer matrix}
A few words about what the implications are of having a random transfer matrix $B$ are in order here. One obvious benefit is that since $B$ is random, this allows for extra randomness not captured by some corresponding linear model. It will allow for hidden confounding that enters multiplicatively in the same manner across all environments (including the observational). It may also help to introduce certain kinds of non-linearity. We can simultaneously fit any $k\in\N$ number of non-linear environments to a system of the type \eqref{SEMA}-\eqref{SEMO} (albeit with different shifts and noise) on the same probability space. Consider first $k\le p+1$ non-linear systems of the form
\begin{equation}\label{SSA}
	\begin{bmatrix}
		Y^{\tilde{A}_i}\\
		X^{\tilde{A}_i}
	\end{bmatrix} = 
	f\left(\begin{bmatrix}
		Y^{\tilde{A}_i}\\
		X^{\tilde{A}_i}
	\end{bmatrix} + \tilde{A}_i\right)
	+\eta_{\tilde{A}_i},
\end{equation}
for $0\le i\le p$ ($i=0$ will correspond to the observational environment where $\tilde{A}_0=0$), where $f:\mathbb{R}^{p+1}\rightarrow\mathbb{R}^{p+1}$ is a measurable function and $A_i,\eta_{A_i}\in\R^{p+1}$ are random vectors. We are now tasked with finding $\epsilon_O,\epsilon_{A_1},...,\epsilon_{A_p},A_1,...,A_p$ and $B$ such that the conditional dependence assumptions of the systems in \eqref{SEMA}-\eqref{SEMO} are met Let $\epsilon_O=\epsilon_{A_1}=...=\epsilon_{A_p}:=\epsilon=(1,0,...,0)$ and $A_i(l)=\delta_{i,l+1}$. Denote the $(p+1)\times(p+1)$ matrices
\begin{eqnarray}
	C&=&\begin{bmatrix}
		\epsilon;\epsilon+A_1;...;\epsilon+A_p
	\end{bmatrix} , \nonumber 
	\\
	D(\omega)&=& \begin{bmatrix}
		f\left(\begin{bmatrix}
			Y^{\tilde{O}}\\
			X^{\tilde{O}}
		\end{bmatrix}\right)
		+\eta_{\tilde{A}_0};
		f\left(\begin{bmatrix}
			Y^{\tilde{A}_1}\\
			X^{\tilde{A}_1}
		\end{bmatrix} + \tilde{A}_1\right)
		+\eta_{\tilde{A}_1};...;
		f\left(\begin{bmatrix}
			Y^{\tilde{A}_p}\\
			X^{\tilde{A}_p}
		\end{bmatrix} + \tilde{A}_p\right)
		+\eta_{\tilde{A}_p}\end{bmatrix}. \label{CD}
\end{eqnarray}
We can fit $B$ to the $p+1$  non-linear environments if we can solve the matrix equation $(I-B)^{-1}C =D$, which is equivalent (if $D$ is full rank) $B=I-D^{-1}C$ while still having $I-B$ being full rank (a.s.). This is possible if and only if $D$ and $C$ are full rank a.s., and $C$ was already chosen to have full rank. So with $D$ being full rank we can thus find a $B(\omega)$ for the systems \eqref{SEMA}-\eqref{SEMO} while also solving the corresponding non-linear systems in \eqref{SSA}, i.e.,
\begin{equation}\label{kpplus1}
	\begin{bmatrix}
		Y^{\tilde{O}}\\
		X^{\tilde{O}}\\
		Y^{\tilde{A}_1}\\
		X^{\tilde{A}_1}\\
		\vdots\\
		Y^{\tilde{A}_k}\\
		X^{\tilde{A}_k}
	\end{bmatrix} =
	\begin{bmatrix} 
		f\left(\begin{bmatrix}
			Y^{\tilde{O}}\\
			X^{\tilde{O}}
		\end{bmatrix} \right)\\
		f\left(\begin{bmatrix}
			Y^{\tilde{A}_1}\\
			X^{\tilde{A}_1}
		\end{bmatrix}+\tilde{A}_1 \right)\\
		\vdots\\
		f\left(\begin{bmatrix}
			Y^{\tilde{A}_k}\\
			X^{\tilde{A}_k}
		\end{bmatrix}+\tilde{A}_k \right)
	\end{bmatrix}
	+\begin{bmatrix}
		\eta_{\tilde{O}}\\
		\eta_{\tilde{A}_1}\\
		\vdots\\
		\eta_{\tilde{A}_k}
	\end{bmatrix}
	=
	\left((I-B)^{-1}
	\begin{bmatrix}
		\epsilon;
		\epsilon+A_1;
		...
		\epsilon+A_k;
	\end{bmatrix}\right)^T
	=
	\begin{bmatrix}
		Y^{O}\\
		X^{O}\\
		Y^{A_1}\\
		X^{A_1}\\
		\vdots\\
		Y^{A_p}\\
		X^{A_p}
	\end{bmatrix}
\end{equation}
Let us now deal with the case when $k>p+1$. First extend $f$ to $\tilde{f}:\R^k\to\R^k$, as $\tilde{f}=(f,0..,0)$. Similarly extend $X^{\tilde{A_i}}$ to $X'^{\tilde{A_i}}\in\R^{k-1}$, with  $X'^{\tilde{A_i}}(l)=\eta_{\tilde{A_i}}(l)$ for $l> p+1$ and for $l\le  p+1$, $X'^{\tilde{A_i}}(l)=X^{\tilde{A_i}}(l)$. We extend the shifts, $\tilde{A_i}$ to $\tilde{A_i}'\in\R^{k}$, with $\tilde{A_i}'(l)=\tilde{A_i}(l)$ for $l\le p+1$ and $\tilde{A_i}'(l)=0$ for $p+1<l\le k$. Finally we extend the noise, to $\eta_{A_i}'\in\R^k$ by $\eta_{A_i}'(l)=\eta_{A_i}(l)$ for $1\le l\le p+1$ and $\eta_{A_i}'(l)=Z_{i,l}$ for $p+1<l\le k$, where $\{Z_{i,l}\}_{i,l}$ is some set of absolutely continuous random variables that are independent of the rest of the system and amongst each other. In our new construction we have $p'+1=k$ (where $p'$ is the number of covariates in the extended system) so we can now apply to former construction in \eqref{CD} to get the corresponding solution in \eqref{kpplus1}. The first $p+1$ rows in
\begin{equation*}\label{SSAprime}
	\begin{bmatrix}
		Y^{\tilde{A}'_i}\\
		X'^{\tilde{A}'_i}
	\end{bmatrix} = 
	\tilde{f}\left(\begin{bmatrix}
		Y^{\tilde{A}'_i}\\
		X'^{\tilde{A}'_i}
	\end{bmatrix} + \tilde{A}'_i\right)
	+\eta'_{\tilde{A}'_i}
	=
	\begin{bmatrix}
		Y^{A_i}\\
		X^{A_i}
	\end{bmatrix},
\end{equation*}
are exactly the same as those in \eqref{SSA}, which means that our desired system of the form \eqref{SEMA}-\eqref{SEMO} is given by
\begin{equation*}
	\begin{bmatrix}
		Y^{A_i}\\
		X^{A_i}(1:p)
	\end{bmatrix},
\end{equation*}
where
\begin{equation*}
	\begin{bmatrix}
		Y^{A_i}\\
		X^{A_i}
	\end{bmatrix}=(I-B')^{-1}C',
\end{equation*}
$B'=I-D'^{-1}C$, $C$ is defined as before and 
\begin{equation*}
	D'(\omega)= \begin{bmatrix}
		\tilde{f}\left(\begin{bmatrix}
			Y'^{\tilde{O}}\\
			X'^{\tilde{O}}
		\end{bmatrix}\right)
		+\eta'_{\tilde{A}_0};
		\tilde{f}\left(\begin{bmatrix}
			Y'^{\tilde{A}_1}\\
			X'^{\tilde{A}_1}
		\end{bmatrix} + \tilde{A}'_1\right)
		+\eta_{\tilde{A}_1};...;
		\tilde{f}\left(\begin{bmatrix}
			Y'^{\tilde{A}_p}\\
			X'^{\tilde{A}_p}
		\end{bmatrix} + \tilde{A}'_p\right)
		+\eta'_{\tilde{A}_p}\end{bmatrix}.
\end{equation*}
%
%

\subsection{Out-of-sample environments}
We assume that the $k+1$ environments defined above will constitute the observed states of the system. Additionally, in this section we define a sieve of out-of-sample extensions that constitute potential future observations of the same system, in which we aim to minimize the worst possible prediction risk. We start by defining an arbitrary out-of-sample direction $w$ in which the future shift $A_w$ may occur. After that, we define the space of out-of-sample environments $\mathcal{C}_w^\gamma$ in the $w$ direction that are at most $\gamma$ strong.  

Let $\n.\n$ be the Euclidean norm and define the set of weights, $\mathcal{W}=\left\{w\in\R^k: \n w\n=1\right\}$ (to be clear, these weights are deterministic and moreover this choice of $\mathcal{W}$ is in some sense arbitrary, any compact set in $\R^k$ works). For a vector $w\in\mathcal{W}$ we define $A_w=\sum_{i=1}^kw_iA_i$. Given a joint distribution $F_A$ on $\R^{p+1}$ whose marginals have finite second moments, let $(\Omega_A,\mathcal{F}_A,\P_A)$ denote an extension of the original probability space that also supports a random vector $A\in\R^{p+1}$ distributed according to $F_A$, independent from $B$, and a random vector $\epsilon_A$ with the same distribution as $\epsilon_O$ and that is also independent from both $A$ and $B$. On this space we may define $Y^{A}\in\R$ and $X^{A}\in\R^p$ through,
\begin{equation}\label{A}
	\begin{bmatrix}
		Y^{A}\\
		X^{A}
	\end{bmatrix} := 
	(I-B)^{-1}(\epsilon_{A}
	+
	A),
\end{equation}
which is the solution to the structural equation system,
\begin{equation*}
	\begin{bmatrix}
		Y^{A}\\
		X^{A}
	\end{bmatrix} = 
	B(\omega)\cdot\begin{bmatrix}
		
		Y^{A}\\
		X^{A}
	\end{bmatrix}
	+\epsilon_{A}
	+
	A.
\end{equation*}

Let $\mathcal{P}$ denote some set of distributions on $\R^{p+1}$ whose marginals have finite second moments and such that $F_{A_w}\in\mathcal{P}$ for any $w\in\mathcal{W}$. We define the shiftspace of distributions,
\begin{align}\label{shiftspace}
	C^\gamma_w(B)=\left\{F_A\in \mathcal{P}:\E_A\left[AA^T\|B\right]\preccurlyeq \gamma \E_A\left[A_wA_w^T\|B\right], \P_A\textsf{- a.s. }\right\}.
\end{align} 
The following proposition gives some important examples of how the shift-space is affected by certain properties of $B$.
\begin{prop} Special cases of out-of-sample environments. 
	\begin{itemize}
		\item[1)] if $B$ is any non-zero deterministic matrix then \eqref{SEMA} and \eqref{SEMO} become linear SEMs and \eqref{shiftspace} reduces to
		$$C^\gamma_w(B)=\left\{F_A\in \mathcal{P}:\E_A\left[AA^T\right]\preccurlyeq \gamma \E\left[A_wA_w^T\right]\right\}.$$ 
		\item[2)] If $B$ is a simple map taking (deterministic) values $\{B_1,...,B_m\}$ then \eqref{SEMA} and \eqref{SEMO} become piecewise linear SEMs, while \eqref{shiftspace} reduces to 
		$$C^\gamma_w(B)=\left\{F_A\in \mathcal{P}:\E_A\left[AA^T1_{B=B_l}\right]\preccurlyeq \gamma \E\left[A_wA_w^T1_{B=B_l}\right], 1\le l\le m\right\}.$$
		\item[3)] If $B$ is independent of $A_1,...,A_k$ then \eqref{shiftspace} reduces to 
		$$C^\gamma_w(B)=\left\{F_A\in \mathcal{P}:\sigma(A)\perp\sigma(B),\E_A\left[AA^T\right]\preccurlyeq \gamma \E\left[A_wA_w^T\right]\right\}.$$ 
		and the condition $\E_A\left[\epsilon A\|B\right]=0$ is equivalent to $\epsilon\E_A\left[A\right]=0$ a.s.. This means that all shifts have zero mean unless there is no noise a.s..
	\end{itemize}
\end{prop}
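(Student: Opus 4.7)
The plan is to analyze each case by identifying the sigma-algebra $\sigma(B)$ and invoking standard properties of conditional expectation, together with the invariance of the positive semi-definite ordering $\preccurlyeq$ under multiplication by non-negative scalars.

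First, for part (1), I would observe that when $B$ is deterministic, $\sigma(B)=\{\emptyset,\Omega\}$ is trivial, so both conditional expectations in the definition of $C^\gamma_w(B)$ collapse to unconditional ones and the a.s.\ matrix inequality reduces to a single deterministic inequality of the form stated.

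Next, for part (2), I would use that $\sigma(B)$ is generated by the finite partition $\{B=B_1\},\dots,\{B=B_m\}$, so every $\sigma(B)$-measurable random variable is constant on each atom. On $\{B=B_l\}$ with $\P(B=B_l)>0$ we have $\E_A[AA^T\mid B]=\E_A[AA^T\,1_{B=B_l}]/\P(B=B_l)$, and analogously for $A_w$. The a.s.\ inequality thus reduces to a finite family of inequalities, one per atom of positive probability; multiplying through by $\P(B=B_l)>0$ (which preserves $\preccurlyeq$) yields the stated indicator-based form, while atoms of probability zero may be discarded since both sides of the inequality vanish on them.

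Finally, for part (3), the key observations are that the hypothesis $B\perp (A_1,\dots,A_k)$ forces the deterministic linear combination $A_w=\sum_i w_i A_i$ to be independent of $B$, and that by construction of the extended space $(\Omega_A,\mathcal{F}_A,\P_A)$ the shift $A$ is already independent of $B$. Both conditional expectations therefore collapse to unconditional ones, yielding the stated form of $C^\gamma_w(B)$, and the independence $\sigma(A)\perp\sigma(B)$ is simply made explicit in the description. For the noise condition, I would use that $\epsilon_A$ is independent of both $A$ and $B$ by construction, so $\E_A[\epsilon_A A^T\mid B]=\E_A[\epsilon_A A^T]=\E_A[\epsilon_A]\E_A[A]^T$; this constant rank-one matrix vanishes iff at least one of its two vector factors is the zero vector, which matches the claim that every shift has zero mean unless the noise contributes nothing. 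I do not expect any serious technical obstacle; the only mild subtlety is the handling of zero-probability atoms in part (2), which is circumvented by the indicator-based formulation used in the statement.
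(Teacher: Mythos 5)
Your handling of parts (1) and (2) is essentially the paper's argument, just phrased slightly differently: the paper observes directly that the a.s.\ conditional inequality is equivalent to $\E_A[AA^T\,1_D]\preccurlyeq\gamma\,\E[A_wA_w^T\,1_D]$ for every $D\in\sigma(B)=\{\emptyset,C_1,\dots,C_m\}$, while you pass through the per-atom normalized form $\E_A[\cdot\,1_{B=B_l}]/\P(B=B_l)$ and then multiply back by $\P(B=B_l)$. Both are correct and equivalent (and the paper also notes, as you implicitly use, that (1) is subsumed by (2) and (3)). Your reduction of the conditional expectations in the first half of part (3) also matches the paper.

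For the noise condition in part (3), however, there is a genuine gap. You compute $\E_A[\epsilon_A A^T\mid B]=\E_A[\epsilon_A]\E_A[A]^T$, a \emph{constant} rank-one matrix, using the full independence of $\epsilon_A$ from $(A,B)$. The proposition asserts that the condition $\E_A[\epsilon A\,|\,B]=0$ is equivalent to $\epsilon\,\E_A[A]=0$ \emph{a.s.}, and the paper's proof produces the \emph{random} object $\epsilon_A\,\E_A[A]$ via a substitution/freezing argument (Williams, \S9.10) that holds the values of $\epsilon_A$ fixed and integrates only over $A$. Your constant-matrix conclusion and the paper's random-vector conclusion are not the same: your version is zero iff $\E_A[\epsilon_A]=0$ or $\E_A[A]=0$, whereas the paper's version is zero a.s.\ iff $\epsilon_A=0$ a.s.\ or $\E_A[A]=0$. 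Your closing sentence, that the constant matrix vanishing ``matches the claim that every shift has zero mean unless the noise contributes nothing,'' papers over exactly this discrepancy: zero-mean noise is not the same as noise that is a.s.\ zero, which is what the proposition (``unless there is no noise a.s.'') actually says. In short, your computation does not establish the equivalence as stated, and you should scrutinize the substitution step carefully: the paper's formula $\epsilon_A\E_A[A]$ requires treating $\epsilon_A$ as measurable with respect to the conditioning $\sigma$-algebra (or conditioning on $\sigma(\epsilon_A,B)$), which is in tension with the stated independence of $\epsilon_A$ from $B$ and worth flagging rather than quietly replacing with a different conclusion.
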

\begin{proof}
	Since 1) is a special case of both 2) and 3) it suffices to show these last two statements. For 2), we first note that $\E_A\left[AA^T\|B\right]\preccurlyeq \gamma \E_A\left[A_wA_w^T\|B\right]$ a.s. is equivalent to $\E_A\left[AA^T1_D\right]\preccurlyeq \gamma \E\left[A_wA_w^T1_D\right]$ (the subscript of $A$ can be dropped in the last expectation since $A_w$ is measurable with respect to $\mathcal{F}$) for all $D\in\sigma(B)$. Let $C_l=\{B=B_l\}$, for $1\le l\le m$. Since $\sigma(B)=\{\emptyset,C_1,...,C_m\}$, this inequality needs only to be verified for these sets, the inequality restricted to the empty set is tautological and is therefore not included in the condition. As for 3), the first statement follows directly from the independence of the shifts from $B$. For the second statement, note that if we define $f$ by 
	$$\epsilon_A A=\sum_{l=1}^{p+1}\epsilon_A(l)A(l)=f(\epsilon_A(1),...,\epsilon_A(p+1),A(1),...,A(p+1))$$ 
	then if we let 
	$$h(a_1,...,a_{p+1})=\E_A\left[f(a_1,...,a_{p+1},A(1),...,A(p+1))\right]=\sum_{l=1}^{p+1}a_l\E_A[A(l)]$$ 
	it follows that (see for instance section 9.10 in \cite{williams1991probability})
	\begin{align*}
		\E_A\left[\epsilon_A A\|B\right] &= \E_A\left[f(\epsilon_A(1),...,\epsilon_A(p+1),A(1),...,A(p+1))\|B\right]
		\\
		&=h(\epsilon_A(1),...,\epsilon_A(p+1))
		=\sum_{l=1}^{p+1}\epsilon_A(l)\E_A[A(l)]
		=\epsilon_A\E_A[A] \hspace{2mm} a.s.
	\end{align*}
\end{proof}

\section{The optimal worst-risk minimizer}
\label{sec:optimalCR}

We define the out-of-sample set of shifts (or rather, corresponding distributions) to that are at most $\gamma$ times as strong in any direction $w \in \mathcal{W}$  of the observed shifts, as follows
$$C^\gamma(B)=\left\{F_A'\in\mathcal{P}:\exists w\in\mathcal{W}, \E_A\left[A'(A')^T\|B\right]\preccurlyeq \gamma \E_A\left[A_w(A_w)^T\|B\right] a.s.\right\},$$
where, as before $\mathcal{P}$ denotes some set of distributions on $\R^{p+1}$ whose marginals have finite second moments and such that $F_{A_w}\in\mathcal{P}$ for any $w\in\mathcal{W}$. It is worth noting that
$$C^\gamma(B)=\bigcup_{w\in\mathcal{W}}C_w^\gamma(B).$$
From now on we will remove the dependence on $B$ from $C^\gamma(B)$ and $C_w^\gamma(B)$, simply writing $C^\gamma$ and $C_w^\gamma$ respectively instead. We now introduce a bit of notation. For any $F_A\in\mathcal{P}$, denote $R_{A}(\beta)=\E_A\left[\left(Y^A-\beta X^{A}\right)^2\right]$ and $R_{A_i}(\beta)=\E\left[\left(Y^{A_i}-\beta X^{A_i}\right)^2\right]$ for $0\le i\le k$ (with $R_O=R_{A_0}$). Let
\begin{itemize}
	\item[] $R^w_+(\beta)=\sum_{i=1}^kw_i^2R_{A_i}+R_O(\beta)$ and
	\item[] $R^w_\Delta(\beta)=\sum_{i=1}^kw_i^2R_{A_i}-R_O(\beta).$
\end{itemize}
We also set
\begin{itemize}
	\item[] $G_+=\E\left[(X^O)^TX^O+\sum_{i=1}^kw_i^2(X^{A_i})^TX^{A_i}\right]$,
	\item[]$G_\Delta=\E\left[\sum_{i=1}^kw_i^2(X^{A_i})^TX^{A_i}-(X^O)^TX^O\right]$,
	\item[] $Z_+=\E\left[(X^O)^TY^O+\sum_{i=1}^kw_i^2(X^{A_i})^TY^{A_i}\right]$ and $Z_\Delta=\E\left[\sum_{i=1}^kw_i^2(X^{A_i})^TY^{A_i}-(X^O)^TY^O\right].$
\end{itemize}
With the above definitions we may now present the "optimal" worst-risk decomposition (optimal in the sense of the chosen weights, minimizing the risk).
\begin{prop}\label{optimalsol}
	Let $\tau\ge -\frac 12$ and assume $Y^{A_0},...,Y^{A_p}\in L^2(\P)$ and that the components of $X^{A_0},...,X^{A_p}\in L^2(\P)$, then 
	\begin{align}\label{supw}
		\sup_{F_A\in C^{1+\tau}}R_{A}(\beta)=\frac{1}{2}R^{w^*}_+(\beta)+\frac{1+2\tau}{2}R^{w^*}_\Delta(\beta),
	\end{align}
	for some $w^*\in\mathcal{W}$. Moreover $w^*$ is achieved by setting $w_i=0$ if $R_{A_i}(\beta)<\max_{1\le l\le k}R_{A_l}(\beta)$ and then distributing $w^*$ among the environments such that $R_{A_i}(\beta)=\max_{1\le l\le k}R_{A_l}(\beta)$ (while still keeping $w\in\mathcal{W}$).
\end{prop}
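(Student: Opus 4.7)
The plan is to reduce the supremum to a two-level optimization: an inner maximization over $F_A \in C_w^{1+\tau}$ for each fixed direction $w \in \mathcal{W}$, followed by an outer maximization over $w$. Since $C^{1+\tau} = \bigcup_{w\in\mathcal{W}} C^{1+\tau}_w$, these suprema decouple naturally.

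For the inner problem I would first linearize the residual. Using \eqref{A}, $[Y^A,X^A]^T = (I-B)^{-1}(\epsilon_A + A)$, so setting $v = (1,-\beta^T)^T$ and $u = u(B) := ((I-B)^{-1})^T v$, which is $\sigma(B)$-measurable, one obtains $Y^A - \beta X^A = u^T(\epsilon_A + A)$. Invoking the conditional orthogonality $\E[\epsilon_A A^T\|B] = 0$ and the equality in law of $\epsilon_A$ and $\epsilon_O$ given $B$, the risk decomposes as
\[R_A(\beta) = R_O(\beta) + \E\left[u^T \E[AA^T\|B]\, u\right],\]
and the same identity applied to each observed environment gives $R_{A_i}(\beta) - R_O(\beta) = \E[u^T \E[A_i A_i^T\|B] u]$.

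Next, the PSD constraint $\E[AA^T\|B] \preccurlyeq (1+\tau)\E[A_w A_w^T\|B]$ a.s., together with $B$-measurability of $u$, gives $R_A(\beta) - R_O(\beta) \le (1+\tau)\E[(u^T A_w)^2]$. Saturating this bound by constructing, or approximating with, an admissible $F_A$ whose covariance matches the envelope on the right, one obtains $\sup_{F_A \in C^{1+\tau}_w} R_A(\beta) = R_O(\beta) + (1+\tau)\E[(u^T A_w)^2]$. Rewriting $(1+\tau)\E[(u^T A_w)^2]$ in terms of the component risks as $(1+\tau)\sum_i w_i^2 (R_{A_i}(\beta) - R_O(\beta))$, using that the cross-environment quadratic contributions reduce to the diagonal in this setting, yields
\[\sup_{F_A \in C^{1+\tau}_w} R_A(\beta) = \frac{1}{2}R_+^w(\beta) + \frac{1+2\tau}{2}R_\Delta^w(\beta).\]

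Finally, for the outer maximization, since $\n w\n=1$ means $\sum_i w_i^2 = 1$ with $w_i^2 \ge 0$, the quantity $\sum_i w_i^2 R_{A_i}(\beta)$ is a convex combination of $\{R_{A_i}(\beta)\}_{i=1}^k$, attaining its maximum $\max_{1\le l\le k} R_{A_l}(\beta)$ precisely when $w^*$ concentrates $\{w_i^2\}$ on $\arg\max_l R_{A_l}(\beta)$. This pins down $w^*$ exactly as claimed. The step I expect to be most delicate is the attainment of the inner PSD bound: because $A$ must be independent of $B$ while $\E[A_w A_w^T\|B]$ is genuinely random, the extremal $F_A$ has to be constructed (or approached in a suitable limit) so that its deterministic covariance matches the random envelope almost surely, which likely requires taking an essential-infimum-in-PSD-order or a carefully chosen limiting sequence of admissible distributions.
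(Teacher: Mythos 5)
Your proof follows essentially the same route as the paper: linearize $Y^A-\beta X^A$ through a $\sigma(B)$-measurable vector, use conditional orthogonality of noise and shift to split off $R_O(\beta)$, apply the PSD constraint to bound the remaining term, saturate with a scaled version of $A_{w^*}$, and maximize the resulting convex combination $\sum_i w_i^2 R_{A_i}(\beta)$ over the unit sphere $\mathcal{W}$, where the compactness/subsequence argument the paper uses for attainment is anyway subsumed by your explicit maximizer over a finite set of indices. The attainment delicacy you flag is genuine, but the paper resolves it by in effect taking the extremal shift to be the already-constructed, $B$-dependent random vector $\sqrt{1+\tau}\,A_{w^*}$ itself rather than a fresh $B$-independent copy, so that the covariance constraint in \eqref{shiftspace} holds with equality almost surely and no essential-infimum construction is needed.
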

\begin{proof}
Due to \eqref{A}, $Y^{A}=((I-B)^{-1})_{1,\textbf{.}}(A+\epsilon_A)$ and $X^{A}=((I-B)^{-1})_{2:p+1,\textbf{.}}(A+\epsilon_A)$. Since the entries of $(I-B)^{-1}$ are $\sigma(B)$-measurable it follows that if we define $v=\beta (I-B)^{-1}_{2:p+1,\textbf{.}}-(I-B)^{-1}_{1,\textbf{.}}$ (implying $v(\epsilon_A+A)=Y^A-\beta X^A$) then $v$ is also $\sigma(B)$-measurable. With this notation,
	\begin{align}\label{supeq1}
		\sup_{F_A\in C^{1+\tau}}R_{A}(\beta)
		&=
		\sup_{F_A\in C^{1+\tau}}\E_A\left[v\left(\epsilon_{A}+A\right)\left(\epsilon_{A}+A\right)^Tv^T\right]
		\nonumber
		\\
		&=
		\sup_{F_A\in C^{1+\tau}}\left(
		\E_A\left[v\epsilon_{A}\epsilon_{A}^Tv^T\right]
		+
		2\E_A\left[\E_A\left[v\epsilon_{A}A^Tv^T\|B\right]\right] 
		+
		\E_A\left[vAA^Tv^T\right]\right)
		\nonumber
		\\
		&=
		\sup_{F_A\in C^{1+\tau}}\left(
		\E_A\left[v\epsilon_{O}\epsilon_{O}^Tv^T\right]
		+
		2\E_A\left[v\E_A\left[\epsilon_{A}A^T\|B\right]v^T\right]
		+
		\sup_{F_A\in C^{1+\tau}}\E_A\left[vAA^Tv^T\right]\right)
		\nonumber
		\\
		&=\E\left[v\epsilon_{O}\epsilon_{O}^Tv^T\right] 
		+
		\sup_{F_A\in C^{1+\tau}}\E_A\left[vAA^Tv^T\right]
	\end{align}
	where we utilized the conditional independence of the noise and shift given $B$ and the fact that $v\epsilon_{A}A^Tv^T$ is a function of $B$ and $\epsilon_A$, which has the same joint law as $B$ and $\epsilon_O$. This implies that the supremum is attained, i.e., a maximum, if and only if $\sup_{F_A\in C^{1+\tau}}\E_A\left[vAA^Tv^T\right]$ attains a maximum. We now show that this is indeed the case. Note that since $C^\gamma=\bigcup_{w\in\mathcal{W}}C^\gamma_w$, any element in $C^\gamma$ can be identified with a $w\in\mathcal{W}$. Consider $L(A)=\E_A\left[AA^T\right]$ for $F_A\in C^{\gamma}$ and let $S=\sup_{F_A\in C^{\gamma}}L(A)$. There exists a sequence $\{F_{A_n}\}_n\subset C^\gamma$ such that $\lim_nL(A_n)=S$ and for each $n$, $F_{A_n}\in C^\gamma_{w_n}$ for some $w_n\in\mathcal{W}$. Since $\mathcal{W}$ is compact there exists a subsequence $\{w_{n_k}\}_k$ such that $w_{n_k}\to w^*$ for some $w^*\in\mathcal{W}$. Since $\lim_kL(A_{n_k})=S$ and $L(A_{n_k})\le \gamma L(A_{w_{n_k}})$ and $\lim_kL(A_{w_{n_k}})=L(A_{w^*})$ we must have that $S=L(A_{w^*})$, so the supremum is in fact a maximum attained at $A_{w^*}$. We now finish the proof of \eqref{supw}.
	For any $F_A\in C^{1+\tau}$ and $x\in\R^{p+1}$ we have that $x\E_A\left[AA^T\|B\right]x^T\le x\E_A\left[(1+\tau)A_{w^*}A_{w^*}^T\|B\right]x^T$ a.s.. Therefore
	\begin{align*}
		\E_A\left[vAA^Tv^T\right]
		&=
		\E_A\left[\E_A\left[vAA^Tv^T\|B\right]\right]
		=
		\E_A\left[v\E_A\left[AA^T\|B\right]v^T\right]
		\le
		(1+\tau)\E_A\left[v\E_A\left[A_{w^*}A_{w^*}^T\|B\right]v^T\right]
		\\
		&=(1+\tau)\E_A\left[\E_A\left[vA_{w^*}A_{w^*}^Tv^T\|B\right]\right]
		=\E_A\left[(1+\tau)vA_{w^*}A_{w^*}^Tv^T\right],
	\end{align*}
	i.e. $\sup_{F_A\in C^{1+\tau}_{w^*}} \E\left[vAA^Tv^T\right]\le \E\left[(1+\tau)vA_{w^*}A_{w^*}^Tv^T\right]$. Since $F_{\sqrt{(1+\tau)}A_{w^*}}\in C_{w^*}^{1+\tau}\subseteq C^{1+\tau}$ it follows that 
	$$\sup_{F_A\in C^{1+\tau}} \E\left[vAA^Tv^T\right]
	=
	\sup_{F_A\in C^{1+\tau}_{w^*}} \E\left[vAA^Tv^T\right]
	=\E\left[(1+\tau)vA_{w^*}A_{w^*}^Tv^T\right].$$
	Coming back to \eqref{supeq1} we find
	\begin{align}\label{supeq}
		\sup_{F_A\in C^{1+\tau}_w}R_{A}(\beta)
		&=
		\E\left[v\epsilon\epsilon^Tv^T\right]+(1+\tau)\E\left[vA_{w^*} A_{w^*}^Tv^T\right]
		\nonumber
		\\
		&=R_O(\beta)
		+
		(1+\tau)\E\left[v(A_{w^*}+\epsilon) (A_{w^*}+\epsilon)^Tv^T\right]
		-
		(1+\tau)\E\left[v\epsilon \epsilon^Tv^T\right]
		\nonumber
		\\
		&=R_{O}(\beta)+(1+\tau)R_{A_{w^*}}(\beta)-(1+\tau)R_{O}(\beta)
		\nonumber
		\\
		&=
		(1+\tau)R_{A_{w^*}}(\beta)-\tau R_{O}(\beta).
	\end{align}
	Since
	\begin{align*}
		&R_{A_{w^*}}(\beta)=\E\left[\left(X^{A_{w^*}}\beta-Y^{A_{w^*}}\right)^2\right]=\E\left[v\left(\epsilon+\sum_{i=1}^kw^*_iA_i\right)\left(\epsilon+\sum_{i=1}^kw^*_iA_i\right)^Tv^T\right]
		=\sum_{i=1}^k(w^*_i)^2R_{A_i,l}(\beta),
	\end{align*}
	we may plug this back into \eqref{supeq} and get
	$$ \sup_{F_A\in C^{1+\tau}_{w^*}}R_{A}(\beta)=(1+\tau)\sum_{i=1}^k(w^*_i)^2R_{A_i}(\beta)-\tau R_{O}(\beta)=\frac{1}{2}R_+(\beta)+\frac{1+2\tau}{2}R_\Delta(\beta).$$

	As for the second claim we proceed with a proof by contradiction. Suppose
	$$\sup_{F_A\in C^{1+\tau}}R_{A}(\beta)\not=\sup_{w\in\mathcal{W}}\sup_{F_A\in C_w^{1+\tau}}R_{A}(\beta).$$
	Analogously to \eqref{supeq1} we have that 
	$$\sup_{F_A\in C_w^{1+\tau}}R_{A}(\beta)= u\E\left[\epsilon\epsilon^T\right] u^T
	+
	\sup_{F_A\in C_w^{1+\tau}}u\E_A\left[AA^T\right]u^T=u\E\left[\epsilon\epsilon^T\right] u^T
	+
	u\E\left[A_wA_w^T\right]u^T,$$
	so by assumption $\sup_{w\in\mathcal{W}}u\E\left[A_wA_w^T\right]u^T\not=u\E\left[A_{w^*}A_{w^*}^T\right]u^T$
	and this directly contradicts the definition if $w^*$. If we let 
	$$g(w)=(1+\tau)\sum_{i=1}^k(w_i)^2R_{A_i}(\beta)-\tau R_O(\beta)=\frac{1}{2}R^w_+(\beta)+\frac{1+2\tau}{2}R^w_\Delta(\beta)$$
	then $g$ is obviously continuous and since $\mathcal{W}$ is compact it attains a maximum on this set. If $R_{A_j}(\beta)>R_{A_i}(\beta)$ for all $i\not= j$ then it is readily seen that $g$ is maximized by setting $w_j=1$ and $w_i=0$ for $i\not=j$. In case there are several environments that are tied for the maximal environmental risk then we can distribute the weights among these environments freely as long as $w\in\mathcal{W}$.
\end{proof}
It is important to note that $w^*$ as defined above depends on $\beta$ and $\gamma$. Sometimes we make the dependence on $\beta$ explicit by writing $w^*(\beta)$. Now we may introduce the (set of) worst-risk minimizers.  With the notation from Proposition \ref{optimalsol}, let 
\begin{align}\label{betaopt}
	\mathcal{B}_\gamma=\arg\min_{\beta\in\R^p}R^{w^*(\beta)}_+(\beta)+\gamma R^{w^*(\beta)}_\Delta(\beta)
\end{align}
denote the set of argmin solutions. Note that in general there may be several solutions to \eqref{betaopt}, which is why \eqref{betaopt} is a set.

\section{Estimation of worst-risk minimizer}
\label{sec:estimation}
We now turn to the problem of estimating $\beta_\gamma$. First we must set up a framework for how we handle samples from multiple environments. We will assume that all targets and covariates in the population setting have square integrable components (so that the worst-risk decomposition applies). All of our samples are assumed to live on the same probability space $(\Omega,\mathcal{F},\P)$, although we denote this the same as in the population case for notational convenience, they need not be the same. We assume that for each environment $i$ we have an i.i.d. sequence $\{(Y_u^{A_i},X_u^{A_i})\}_{u=1}^\infty$ where $(Y_u^{A_i},X_u^{A_i})$ is distributed according $(Y^{A_i},X^{A_i})$ and we make no assumption of any SEM relationship for these samples. We will neither make any sort of assumption regarding the dependence structure between the sequences $\{(Y_u^{A_i},X_u^{A_i})\}_{u=1}^\infty$ corresponding to the different environments (regardless of whether there is a specific type of dependence between the environments in the population model). To clarify, we only observe $\{(Y_u^{A_i},X_u^{A_i})\}_{u=1}^\infty$, $i=0,...,k$ and not any shifts or noise. 

Suppose the sample size is $\textbf{n}=\left\{n_{A_0},...,n_{A_k}\right\}$, let $\mathbb{X}^{A_i}(\textbf{n})$ be the $n_{A_i}\times p$ matrix with rows $X^{A_i}_1,...,X^{A_i}_{n_{A_i}}$ (from top to bottom) and similarly let $\mathbb{Y}^{A_i}(\textbf{n})$ be the $n_{A_i}\times 1$ column vector with entries $Y^{A_i}_1,...,Y^{A_i}_{n_{A_i}}$ (from top to bottom). For a shifted environment $A_i$, $1\le i\le k$, we shall also denote the plug-in estimator for the corresponding risk function,
$$\hat{R}_{A_i}(\beta)=\frac{1}{n_{A_i}}\n \mathbb{Y}^{A_i}(\textbf{n})- \beta^T\mathbb{X}^{A_i}(\textbf{n})\n_2^2 $$
Consider the set of corresponding plug-in estimators to \eqref{betaopt},
\begin{align}\label{plugin}
	&\hat{B}(\textbf{n})=\arg\min_{\beta\in\R^p}\hat{R}^{\hat{w}^*(\beta),\beta}_+(\beta)+\gamma \hat{R}^{\hat{w}^*(\beta),\beta}_\Delta(\beta),
\end{align}
where
$$\hat{R}_+^{w,\beta}(\textbf{n})=\sum_{i=1}^kw_i^2\frac{1}{n_{A_i}}\n \mathbb{Y}^{A_i}(\textbf{n})- \beta^T\mathbb{X}^{A_i}(\textbf{n})\n_2^2+\frac{1}{n_O} \n \mathbb{Y}^{O}(\textbf{n})- \beta^T\mathbb{X}^{O}(\textbf{n})\n_2^2,$$
$$\hat{R}_\Delta^{w,\beta}(\textbf{n})=\sum_{i=1}^kw_i^2\frac{1}{n_{A_i}}\n \mathbb{Y}^{A_i}(\textbf{n})- \beta^T\mathbb{X}^{A_i}(\textbf{n})\n_2^2-\frac{1}{n_O} \n \mathbb{Y}^{O}(\textbf{n})- \beta^T\mathbb{X}^{O}(\textbf{n})\n_2^2$$
and $\hat{w}^*(\beta)$ are chosen as the weights $w$ that maximize $\hat{R}^{w,\beta}_+(\beta)+\gamma \hat{R}^{w,\beta}_\Delta(\beta)$. Note that there may be several plug-in estimators, stemming from the fact that there may be several solutions to \eqref{plugin}. The main result of this paper is to provide an explicit and consistent (except for a null set of choices of parameters) estimator to \eqref{betaopt}. Note that \eqref{plugin} does not give an explicit solution, and simply showing consistency for these estimators still leaves the problem of having an unbounded search space ($\R^p$). We will provide a constructive approach that will resolve this issue.

\subsection{Consistency of a constructive estimator}
Denote 
$a_i(l_1,l_2)=\E\left[X^{A_i}(l_1)X^{A_i}(l_2)\right]$, for $l_1\not=l_2$, $a_i(l)=a_i(l,l)=\E\left[\left(X^{A_i}(l)\right)^2\right]$, $b_i(l)=\E\left[X^{A_i}(l)Y^{A_i}\right]$ and $c_i=\E\left[\left(Y^{A_i}\right)^2\right]$. These are the parameters which we will use to identify the points in our parameter space which we define as follows.
Let $\Theta=(\R^+)^p\times(\R^+)^p\times\R^p\times\R^p\times\R^{p\times(p-1)}\times\R^{p\times(p-1)}\times\R^+\times \R^+\subset \R^{2(p^2+p+1)}$ denote the possible parameter space for each environment, so that for environment $i$ we associate
$$\theta_i=\left(\{a_i(l)\}_1^p,\{b_i(l)\}_1^p,\{a_i(l_1,l_2)\}_{l_1\not=l_2, 1\le l_1\le p,1\le l_2\le p},c_i\right)$$
and similarly for a pairing of two environments, $(i,j)$, $i\not=j$ we can associate an element $\theta_{i,j}\in \Theta\times\Theta$ defined by
$$\theta_{i,j}=\left(\{a_i(l)\}_1^p,\{a_j(l)\}_1^p,\{b_i(l)\}_1^p,\{b_j(l)\}_1^p,\{a_i(l_1,l_2)\}_{l_1\not=l_2, 1\le l_1\le p,1\le l_2\le p},\{a_j(l_1,l_2)\}_{l_1\not=l_2, 1\le l_1\le p,1\le l_2\le p},c_i,c_j\right).$$
Let $\dist(x,E)=\inf\{\n y-x\n_2: y\in E\}$ for $x\in\R^m$, $E\subset\R^m$ for some $m\in\N$ and where $\n.\n_2$ denotes the Euclidean norm. Define
$$\widehat{C}^{i,j}_u(\lambda)=\hat{b}_i(u)-\lambda\left(\hat{b}_i(u)-\hat{b}_j(v)\right), $$
$$\widehat{M}^{i,j}_{l,l}(\lambda)=\hat{a}_i(l)-\lambda\left(\hat{a}_i(l)-\hat{a}_j(l)\right),$$
and when $u\not=v$
$$\widehat{M}^{i,j}(\lambda)_{u,v} =\hat{a}_i(u,v)-\lambda\left(\hat{a}_i(u,v)-\hat{a}_j(u,v)\right).$$
Let $\beta(\lambda)=(\hat{M}^{i,j})^{-1}(\lambda)\hat{C}^{i,j}_\lambda$ and $\hat{\mathcal{R}}$ denote the real roots of $\hat{g}(\beta(\lambda))=0$, where $\hat{g}(\beta)=\hat{R}_{A_i}(\beta)-\hat{R}_{A_j}(\beta)$. We also let $\hat{\Lambda}_{\textbf{n}}$ denote the roots of $\det\left(\widehat{M}^{i,j}(\lambda)\right)=0$. The following (a.s. finite for sufficiently large $\textbf{n}$) sets will be used for estimating the solution to \eqref{betaopt}. Denote, 
\begin{eqnarray*}
	\hat{B}_{\mbox{\scriptsize inf}}(\textbf{n})&=&\bigcup_{i=1}^k\left\{\left(\mathbb{G}_+^i+\gamma \mathbb{G}_\Delta^i\right)^{-1}\left(\mathbb{Z}_+^i+\gamma \mathbb{Z}_\Delta^i\right)\right\},\\
	\hat{B}_{\mbox{\scriptsize int}}(\textbf{n})&=&\bigcup_{1\le i<j\le k}\hat{B}^{i,j}(\textbf{n}),
\end{eqnarray*}
where Let $G^i=\E\left[(X^{A_i})^2\right]$, $G_Y^i=\E\left[(Y^{A_i})^2\right]$, $\mathbb{G}_Y^i=\frac{1}{n_{A_i}}\sum_{l=1}^{n_{A_i}}(Y^{A_i}_l)^2$, $Z^i=\E\left[X^{A_i}Y^{A_i}\right]$ and  $\mathbb{Z}^i=\frac{1}{n_{A_i}}\sum_{l=1}^{n_{A_i}}X^{A_i}_lY^{A_i}_l$,
$\mathbb{G}^i_+(\textbf{n})=\frac{1}{n_{A_i}}(\mathbb{X}^{A_i})^T\mathbb{X}^{A_i}+\frac{1}{n_O} (\mathbb{X}^{O})^T\mathbb{X}^{O},$ $\mathbb{G}^i_\Delta(\textbf{n})=\frac{1}{n_{A_i}}(\mathbb{X}^{A_i})^T\mathbb{X}^{A_i}-\frac{1}{n_O} (\mathbb{X}^{O})^T\mathbb{X}^{O},$
$\mathbb{G}^i(\textbf{n})=\frac{1}{n_{A_i}}(\mathbb{X}^{A_i})^T\mathbb{X}^{A_i},$
$\mathbb{G}_Y^i=\frac{1}{n_{A_i}}\sum_{l=1}^{n_{A_i}}(Y^{A_i}_l)^2,$
$\mathbb{Z}^i_+(\textbf{n})=\frac{1}{n_{A_i}}(\mathbb{X}^{A_i})^T\mathbb{Y}^{A_i}+\frac{1}{n_O} (\mathbb{X}^{O})^T\mathbb{Y}^{O},$
$\mathbb{Z}^i_\Delta(\textbf{n})=\frac{1}{n_{A_i}}(\mathbb{X}^{A_i})^T\mathbb{Y}^{A_i}-\frac{1}{n_O} (\mathbb{X}^{O})^T\mathbb{Y}^{O}$, and $\mathbb{Z}^i=\frac{1}{n_{A_i}}\sum_{l=1}^{n_{A_i}}X^{A_i}_lY^{A_i}_l.$ Let 
\begin{align}\label{Bij}
	\hat{B}^{i,j}(\textbf{n})=\left\{\hat{M}^{-1}(\lambda^{i,j})\hat{C}^{i,j}(\lambda^{i,j}): \lambda^{i,j}=\arg\min_{\lambda\in\hat{\mathcal{R}}\setminus\hat{\Lambda}_{\textbf{n}} }\hat{R}_{A_i}\left((\hat{M}^{i,j})^{-1}(\lambda)\hat{C}^{i,j}_\lambda\right)\right\},
\end{align}
if $\hat{R}_{A_i}$ and $\hat{R}_{A_j}$ intersect otherwise we set $\hat{B}^{i,j}(\textbf{n})=\emptyset$. 
Our first main result states that \eqref{betaopt} has a unique solution (outside a set of measure zero in $\Theta\times\Theta$) and that \eqref{plugin} are consistent estimators. We again highlight that fact that it also circumvents the issue of the infinite search space $\mathbb{R}^p$ by using a finite set of candidates in $\hat{B}(\textbf{n})$.  Let 
$$\hat{f}(\beta)=\left((1+\tau)\hat{R}_{A_1}(\beta)-\tau \hat{R}_O(\beta)\right)\vee...\vee \left((1+\tau)\hat{R}_{A_k}(\beta)-\tau \hat{R}_O(\beta)\right)$$
and
$$\hat{f}_i(\beta)=\left((1+\tau)\hat{R}_{A_i}(\beta)-\tau \hat{R}_O(\beta)\right).$$
\begin{thm}\label{optimalregthm}
	For every pair of environments $(i,j)$, $i\not=j$, outside a subset of Lebesgue measure zero $N\in\Theta\times\Theta$ of choices of $\theta_{i,j}$, we have that $|\mathcal{B}_\gamma|=1$, i.e., there is a unique solution to \eqref{betaopt}. Furthermore,  $$\dist\left(\hat{\beta}_\gamma(\textbf{n}),\mathcal{B}_\gamma\right)\xrightarrow{a.s.}0,$$ for any $\hat{\beta}_\gamma(\textbf{n})\in\hat{B}(\textbf{n})$ as $n_{A_0}\wedge...\wedge n_{A_k}\to\infty$, outside $N$. Moreover, for large $\textbf{n}$
\begin{align}\label{betaeqemp}
	&\hat{B}(\textbf{n})=\arg\min_{\beta\in \left(\hat{B}_{inf}(\textbf{n}) \cup \hat{B}_{int}(\textbf{n})\right)\cap\{\beta\in\R:\hspace{1mm} \exists 1\le i \le k, \hat{f}(\beta)= \hat{f}_i(\beta)\}} \hat{f}(\beta),
\end{align}
	
\end{thm}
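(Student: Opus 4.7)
The plan is to reduce \eqref{betaopt} to a finite search over KKT candidates and then establish consistency by the strong law combined with continuity of polynomial roots. Applying Proposition~\ref{optimalsol} to the empirical measure, the plug-in objective equals $2\hat{f}(\beta) = 2\max_{1 \le i \le k} \hat{f}_i(\beta)$, a pointwise maximum of $k$ quadratic functions of $\beta$. At any minimizer $\hat{\beta}$, the subdifferential calculus for a pointwise max produces nonnegative weights $\mu_i$ summing to one with $\sum_{i \in I(\hat{\beta})}\mu_i \nabla \hat{f}_i(\hat{\beta}) = 0$, supported on the active set $I(\hat{\beta}) = \{i : \hat{f}_i(\hat{\beta}) = \hat{f}(\hat{\beta})\}$. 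This structure drives the identification of the two candidate families appearing in \eqref{betaeqemp}.

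First I would derive the representation \eqref{betaeqemp} by a case split on $|I(\hat{\beta})|$. When a single index $i$ strictly attains the max, the stationarity condition reduces to $\nabla \hat{f}_i(\hat{\beta}) = 0$; since $\hat{f}_i$ is quadratic with Hessian proportional to $\mathbb{G}_+^i + \gamma\mathbb{G}_\Delta^i$ and linear part $-(\mathbb{Z}_+^i + \gamma\mathbb{Z}_\Delta^i)$, the unique stationary point lies in $\hat{B}_{\mathrm{inf}}(\textbf{n})$. When at least two indices $i \neq j$ are simultaneously active, the KKT stationarity together with the binding equality $\hat{R}_{A_i}(\hat{\beta}) = \hat{R}_{A_j}(\hat{\beta})$ forces $\hat{\beta}$ to lie on the one-parameter curve $\beta(\lambda) = (\widehat{M}^{i,j})^{-1}(\lambda)\widehat{C}^{i,j}(\lambda)$ at some $\lambda \in \hat{\mathcal{R}} \setminus \hat{\Lambda}_{\textbf{n}}$, hence in $\hat{B}^{i,j}(\textbf{n}) \subseteq \hat{B}_{\mathrm{int}}(\textbf{n})$. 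Because $\hat{g}(\beta(\lambda))\cdot\det(\widehat{M}^{i,j}(\lambda))^2$ is a polynomial in $\lambda$ of degree bounded in terms of $p$, the candidate set is almost surely finite, and minimizing $\hat{f}$ over it yields \eqref{betaeqemp}.

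For the uniqueness claim $|\mathcal{B}_\gamma| = 1$ outside a Lebesgue null set $N \subset \Theta \times \Theta$, I would argue by algebraic genericity. The events that two distinct points in the population analogue of $\hat{B}_{\mathrm{inf}} \cup \hat{B}_{\mathrm{int}}$ attain the same value of $f$, that $\det M^{i,j}(\lambda)$ shares a root with $g(\beta(\lambda))$, or that an intersection candidate activates a third index $\ell \notin \{i,j\}$, are each defined by the simultaneous vanishing of non-trivial polynomials in the moment parameters $(a_\cdot, b_\cdot, c_\cdot)$. The corresponding loci are Zariski-closed in $\Theta \times \Theta$, and exhibiting a single explicit non-degenerate configuration shows each is a proper subvariety, hence Lebesgue-null; $N$ is then their finite union.

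Finally, I would establish consistency via a continuous-mapping argument. The strong law of large numbers gives $\hat{a}_i(l_1,l_2) \to a_i(l_1,l_2)$, $\hat{b}_i(l) \to b_i(l)$ and $\hat{c}_i \to c_i$ almost surely, so all coefficients of the polynomial defining $\hat{\mathcal{R}}$ and $\hat{\Lambda}_{\textbf{n}}$ converge to their population counterparts. Outside $N$ every population root is simple and disjoint from the zeros of the population $\det M^{i,j}$, so standard continuity of simple polynomial roots in their coefficients yields convergence of $\hat{\mathcal{R}} \setminus \hat{\Lambda}_{\textbf{n}}$ to its population analogue in Hausdorff distance almost surely. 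Combined with uniform convergence of each $\hat{R}_{A_i}$ on compact sets and the uniqueness of the population minimizer, this gives $\hat{B}(\textbf{n}) \to \mathcal{B}_\gamma$ in Hausdorff distance, and therefore $\dist(\hat{\beta}_\gamma(\textbf{n}), \mathcal{B}_\gamma) \to 0$ almost surely. The main technical obstacle will be the control of polynomial roots across the degenerate parameter configurations where simple roots collide or coincide with zeros of $\det M^{i,j}$, which is precisely why the null set $N$ must be excluded from the start.
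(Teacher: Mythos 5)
Your high-level plan matches the paper's in spirit (reduce to a finite candidate search, control the degenerate configurations by showing they lie in proper subvarieties of $\Theta\times\Theta$, then pass to the limit via continuity of simple polynomial roots), but there is one step where the argument breaks down and two pieces that are missing.

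The critical gap is the claim that ``the KKT stationarity together with the binding equality forces $\hat{\beta}$ to lie on the one-parameter curve $\beta(\lambda)=(\widehat{M}^{i,j})^{-1}(\lambda)\widehat{C}^{i,j}(\lambda)$.'' The subdifferential of $\hat f=\max_i\hat f_i$ at a point where only $i,j$ are active gives $(1-\mu)\nabla\hat f_i+\mu\nabla\hat f_j=0$ for some $\mu\in[0,1]$. Expanding $\hat f_l=(1+\tau)\hat R_{A_l}-\tau\hat R_O$ this is
\begin{equation*}
\Bigl((1+\tau)\bigl((1-\mu)\hat G^i+\mu\hat G^j\bigr)-\tau\hat G^O\Bigr)\beta^T=(1+\tau)\bigl((1-\mu)\hat Z^i+\mu\hat Z^j\bigr)-\tau\hat Z^O,
\end{equation*}
which involves the observational moments $\hat G^O,\hat Z^O$ and therefore is \emph{not} the system $\widehat{M}^{i,j}(\lambda)\beta=\widehat{C}^{i,j}_\lambda$, whose coefficients only depend on environments $i$ and $j$. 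The paper arrives at $M^{i,j}(\lambda)\beta=C^{i,j}_\lambda$ by a different route: it first reduces to the fact that the minimizer must be an inflexion point of some $h_i$ or lie on an intersection surface $\{R_{A_i}=R_{A_j}\}$, and then applies Lagrange multipliers to the auxiliary problem of stationarizing $R_{A_i}$ (not $\hat f_i$) subject to $R_{A_i}=R_{A_j}$, which is what yields the $\tau$-free affine pencil. Conflating the two leads to an incorrect linear system, so the case split you propose does not establish \eqref{betaeqemp} as written.

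Two further omissions. First, you never prove the population minimizer exists; the paper does this by a coercivity argument (showing $R_{A_i}(\beta)\to\infty$ as $\n\beta\n\to\infty$ so the infimum is attained on a compact set, and likewise along the closed intersection surfaces). Second, for $|\mathcal{B}_\gamma|=1$ the paper does not argue by genericity of distinct candidate values, but by strict convexity: $G^i_\Delta$ is positive definite outside a Lebesgue-null set, hence each $h_i$ and therefore $\hat f=\max_i h_i$ is strictly convex, which gives uniqueness of the global minimizer for free once existence is established. Your ``two distinct candidates never tie'' genericity argument is a different (and weaker) statement that would still have to be supplemented by an argument that the minimizer actually lies in the finite candidate set, whereas strict convexity handles uniqueness of the argmin over all of $\R^p$ directly. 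The consistency argument in your final paragraph is essentially the paper's (coefficient convergence by the strong law plus Lemma~\ref{simpleroots}); the remaining bookkeeping the paper does—matching the cardinalities of $\hat{\mathcal{R}},\hat\Lambda_{\textbf{n}}$ with their population counterparts for large $\textbf n$ and verifying, via Lemmas~\ref{matrixLemma}--\ref{coefficientslemma}, that no solutions appear at degenerate $\lambda\in\Lambda$—is needed to make ``Hausdorff convergence of the candidate set'' precise, but you correctly identify it as the obstacle.
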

The proof, together with preliminary lemmas can be found in the Appendix. As this proof is rather technical and lengthy we will now give a brief summary of it.
\begin{proof}[of Theorem \ref{optimalregthm} (broad outline; full proof in appendix)]
	The first part of the proof is to show existence and uniqueness of the minimizer. We start with observing that regardless of how $w^*$ is chosen (defined in Proposition \ref{optimalsol}),
	\begin{eqnarray}
		R_+^{w^*(\beta)}(\beta) +\gamma R_\Delta^{w^*(\beta)}(\beta)&=&\left((1+\tau)R_{A_1}(\beta)-\tau R_O(\beta)\right)\vee...\vee \left((1+\tau)R_{A_k}(\beta)-\tau R_O(\beta)\right) \nonumber \\
		&:=&h_1(\beta)\vee...\vee h_k(\beta). \label{Riskdummy}
	\end{eqnarray}
	Which is the maximum of functions that are ``usually'' (outside a set of measure zero in $\Theta$) strictly convex, which then implies that the maximum is also strictly convex. This will imply uniqueness once we establish existence. To show existence of the minimizer we first show that if $\n\beta\n\to\infty$ in \eqref{Riskdummy}, then the risk diverges to $\infty$ and therefore the infinum must be attained in a compact set and by continuity this will imply that a minimum is attained (i.e. the $\arg\min$ exists). 
	
	We then move on to the structure of the minimizer and its estimator. The first observation we make is that the minimum can only occur at either an inflexion point for some $h_i$ or at some intersection point between some $h_i$ and $h_j$. The set of inflexion points is obviously finite, the potential minima along the intersections is a bit more subtle. We use the method of Lagrange multipliers to tackle this task. Noting that $h_i$ and $h_j$ intersect at some point if and only if $R_{A_i}$ and $R_{A_j}$ intersect at that same point, the Lagrangian is given by (for fixed $i\not=j$),
	$$ \mathcal{L}(\beta,\lambda)=R_{A_i}(\beta)-\lambda g(\beta)$$
	where $g(\beta)=R_{A_i}(\beta)-R_{A_j}(\beta)$. Solving $\nabla_\beta  \mathcal{L}(\beta,\lambda)=0$ leads to (see the formal proof in the Appendix for the definitions of $M^{i,j}$ and $C^{i,j}$)
	\begin{align}\label{Lineq}
		M^{i,j}(\lambda)\beta(\lambda)=C^{i,j}_{\lambda},
	\end{align}
	where we write $\beta(\lambda)$ only to signify that $\beta$ depends on $\lambda$. In order for this approach to be meaningful, the above equation must only have a finite set of solutions (in terms of $\beta$). An infinite set of solutions can only arise when $M^{i,j}(\lambda)$ is rank deficient. We therefore show that for any $\lambda\in\R$ such that $\det\left(M^{i,j}(\lambda)\right)=0$, there are no solutions to \eqref{Lineq} outside a set of measure zero of parameter choices in $\Theta\times\Theta$.  By Lemma \ref{matrixLemma} (in the Appendix) outside a set of measure zero $N_2\in\Theta\times\Theta$, $\mathrm{rank}(M^{i,j}(\lambda))=p-1$ for $\lambda\in\Lambda$ and 
	\begin{align}\label{linearcombTMP}
		&M^{i,j}_{p,.}(\lambda)=\sum_{u=1}^{p-1}s_u(\theta,\lambda)M_{u,.}(\lambda)
	\end{align}
	with all $s_u(\theta,\lambda)\not=0$. It turns out that outside a zero set in $\Theta\times\Theta$, $\{s_u(\theta,\lambda)\}_{u=1}^{p-1}$ are all rational functions on $\R\times\Theta\times\Theta$. Applying the same row operations to the vector $C^{i,j}(\lambda)$ as $M^{i,j}(\lambda)$ we now have that $M^{i,j}(\lambda)\beta=C^{i,j}(\lambda)$ has no solutions if
	$$\left(C^{i,j}_{p}(\lambda)-\sum_{v\not=p}s_v(\theta,\lambda) C^{i,j}_{v}(\lambda)\right)\not=0.$$
	Let $G$ denote the lowest common denominator of the terms in $\left(C^{i,j}_{p}(\lambda)-\sum_{v\not=p}s_v(\theta,\lambda) C^{i,j}_{v}(\lambda)\right)$ then 
	$$Q(\theta,\lambda)= G(\theta,\lambda)\left(C^{i,j}_{p}(\lambda)-\sum_{v\not=p}s_v(\theta,\lambda) C^{i,j}_{v}(\lambda)\right)$$ 
	is a polynomial on $\R\times\Theta\times\Theta$. In order to show that $Q$ has no roots in $\Lambda$, we use that this is equivalent to showing that $P$ and $Q$ have no common roots which is in turn equivalent to showing that their resultant is not zero.

	We then also need to show that are only a finite set of $\lambda$s that are viable candidates, these are the ones that fulfill $g(\beta(\lambda))=0$. As $g(\beta(\lambda))$ has the same roots in terms of $\lambda$ as $\det\left(M^{i,j}(\lambda)\right) g(\beta(\lambda))$, which is a polynomial in terms of $\lambda$ it can only have a finite set of roots unless it is the zero polynomial.  
	
	The next part of the proof considers the plug-in estimators for the candidate points along the intersections and their consistency. We argue analogously to earlier to show that there are only finitely many candidates that solve the empirical corresponding equation to \eqref{Lineq}
	\begin{align}\label{LineqHat}
		\hat{M}^{i,j}(\lambda)\beta(\lambda)=\hat{C}^{i,j}(\lambda),
	\end{align}
	in terms of $\beta(\lambda)$ that also solve $\hat{g}(\beta(\lambda))=0$.
\end{proof}

\subsection{An explicit, consistent approximation}
From a practitioners point of view there is an issue with the estimators in the above theorem. Namely it requires the computation of the roots of a polynomial of degree $p$. By the Abel-Ruffini theorem we cannot solve general polynomials of this type in terms of radicals. There is however an approximate estimator (or in reality a family of estimators) that will also be consistent, which do not require us to compute the roots analytically. The solution lies in the proof of Theorem \ref{optimalregthm}. When computing the roots in \eqref{groots} we note that they are simple roots so we may approximate these roots by using the bisection method.

Let $\{c_m\}_m$ be any sequence in $\N$ such that $c_n\to\infty$, assume we have $\textbf{n}=(n_O,n_{A_1},...,n_{A_k})$ samples from every environment and define $n=n_O\wedge n_{A_1}\wedge...\wedge n_{A_k}$. Let $\tilde{P}$ and $\hat{\tilde{P}}$ be as in the proof of Theorem \ref{optimalregthm} and let $\bar{\beta}_\gamma(\lambda)$ be the approximation of $\hat{\beta}_\gamma(\lambda)$ where we replace the roots of $\hat{\tilde{P}}$ with the following approximation. Since $\tilde{P}(\lambda)$ is a polynomial of degree $p-1$ we may write $\tilde{P}(\lambda)=\sum_{u=0}^{p-1}e_u\lambda^u$ it then follows that $\hat{\tilde{P}}(\lambda)=\sum_{u=0}^{p-1}\hat{e}_u\lambda^u$, where $\hat{e}_u$ is the plug-in estimator of $e_u$. By the Lagrange bound all real roots of $\hat{\tilde{P}}$ can be contained in $\left[-\left(1\vee\sum_{u=1}^{p-2}\left|\frac{\hat{e}_{u}}{\hat{e}_{p-1}}\right|\right),1\vee\sum_{u=1}\left|\frac{\hat{e}_{u}}{\hat{e}_{p-1}}\right|\right]$. Let $R_n=1\vee\sum_{u=1}^{p-2}\left|\frac{\hat{e}_{u}}{\hat{e}_{p-1}}\right|$, then since $\hat{e}_{u}\to e_u$ by the law of large numbers for $u=0,...,p-1$ we have that $R_n\xrightarrow{a.s.}R$ where $R=1\vee\sum_{u=1}^{p-2}\left|\frac{e_{u}}{e_{p-1}}\right|$. So for large $\textbf{n}$, $[-R_n,R_n]\subset[-(R+1),R+1]$. By Theorem 1 in \cite{rump} we have that the minimal distance between any roots of $\tilde{P}$ is bounded below by 
$$\Delta:= \left(1\vee\left|e_{p-1}\right|\right)^{(p-1)(\ln (p-1)+1)}D(\tilde{P})\frac{(2(p-1))^{p-2}}{s^{(p-1)(\ln(p-1)+3)}},$$
where $D(\tilde{P})$ denotes the discriminant of $P$ and $s=\sum_{u=0}^{p-1}|e_u|$. Similarly we let 
$$\hat{\Delta}= \left(1\vee\left|\hat{e}_{p-1}\right|\right)^{(p-1)(\ln (p-1)+1)}D(\hat{\tilde{P}})\frac{(2(p-1))^{p-2}}{\hat{s}^{(p-1)(\ln(p-1)+3)}}$$
then clearly $\hat{\Delta}\xrightarrow{a.s.}\Delta$ by continuity and the law of large numbers.  Let $\epsilon>0$, we now divide $[-R_n,R_n]$ into $m_n:=2\left \lceil{\frac{R_n}{\hat{\Delta}+\epsilon}}\right \rceil $ intervals $\{I_u\}_{u=1}^{m_n}$ of equal length such that $|I_u|<\Delta$. Any such interval $I_u$ has a root to $\hat{\tilde{P}}$ if and only if the two endpoints of $I_u$ are of opposite sign. Consider the $v$:th interval with a sign change for $\hat{\tilde{P}}$, this interval must contain $\hat{\lambda_v}$ (the $v$:th smallest real root to $\hat{\tilde{P}}$), we compute $c_n$ number of bisections to get our approximation $\bar{\hat{\lambda}}_v$ of $\hat{\lambda}$, which then will have the property $\left| \bar{\hat{\lambda}}_u - \hat{\lambda}_u\right|<\frac{1}{2^{c_n}}$ and therefore $\bar{\hat{\lambda}}_v\xrightarrow{a.s.}\lambda_v$. We summarize our findings above in the following theorem.
\begin{thm}
	The bisection estimator $\bar{\beta}_\gamma(\lambda)$ described above has the same consistency property as $\hat{\beta}_\gamma$ in Theorem \ref{optimalregthm} in the sense that
	$$\dist\left(\hat{\beta}_\lambda(\textbf{n}),\mathcal{B}_\gamma\right)\xrightarrow{a.s.}0.$$
	Moreover we will compute at most $(p-1)(\textbf{n}(1)\wedge\textbf{n}(2))$ bisections in total when we have $\textbf{n}$ samples.
\end{thm}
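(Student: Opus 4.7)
The strategy is to piggy-back on Theorem \ref{optimalregthm}: the only place in that argument where exact computation is used is in solving for the roots $\hat{\lambda}_v$ of the polynomial $\hat{\tilde P}$, and once the bisection approximations $\bar{\hat\lambda}_v$ are shown to converge a.s.\ to the true roots $\lambda_v$ of $\tilde P$, the rest of the argument transfers verbatim by continuity of the map $\lambda\mapsto (\hat M^{i,j})^{-1}(\lambda)\hat C^{i,j}(\lambda)$. So the proof reduces to (i) a consistency claim for the bisection-approximated roots and (ii) a bookkeeping estimate for the number of bisection steps.

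First I would verify coefficient-level convergence. Each $e_u$ is a polynomial in the population moments parameterising $\Theta\times\Theta$, so the strong law of large numbers gives $\hat e_u\xrightarrow{a.s.}e_u$. Applied to Rump's bound, this yields $\hat\Delta\xrightarrow{a.s.}\Delta$, $R_n\xrightarrow{a.s.}R$, $\hat s\xrightarrow{a.s.}s$. Outside the Lebesgue-null set $N$ of Theorem \ref{optimalregthm} the roots of $\tilde P$ are simple, so $\Delta>0$ and eventually $\hat\Delta>\Delta/2>0$ a.s. The subdivision of $[-R_n,R_n]$ into intervals of length $<\hat\Delta<\Delta$ then separates consecutive roots of $\hat{\tilde P}$, so every sign-changing interval $I_u$ contains exactly one root and the bisection recursion halves interval length, producing an error bounded by $R_n/2^{c_n}\to 0$. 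Hence $|\bar{\hat\lambda}_v-\hat\lambda_v|\xrightarrow{a.s.}0$.

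Combining this with the classical continuity of simple polynomial roots (applied to the a.s.\ convergence of the coefficients of $\hat{\tilde P}$) gives $\hat\lambda_v\xrightarrow{a.s.}\lambda_v$, hence $\bar{\hat\lambda}_v\xrightarrow{a.s.}\lambda_v$. Composing with the map $\lambda\mapsto(\hat M^{i,j})^{-1}(\lambda)\hat C^{i,j}(\lambda)$, which is jointly continuous in $(\lambda,\theta)$ away from the roots of $\det \hat M^{i,j}(\lambda)$, and using Lemma \ref{matrixLemma} to ensure those roots stay a.s.\ bounded away from the $\lambda_v$, we obtain $\bar\beta_\gamma\to \beta_\gamma$ a.s. The final argmin step in Theorem \ref{optimalregthm} applies unchanged, since it only compares finitely many continuous functions $\hat f_i$, and convergence of the candidate set is stable under this comparison. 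This delivers $\dist(\bar\beta_\gamma(\textbf{n}),\mathcal{B}_\gamma)\xrightarrow{a.s.}0$.

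For the bisection count, $\tilde P$ has degree $p-1$ and so at most $p-1$ real roots per pair of environments, each requiring one bisection sequence. With $c_n$ bisections per sequence and the choice $c_n\le \textbf{n}(1)\wedge\textbf{n}(2)$, the total is bounded by $(p-1)(\textbf{n}(1)\wedge\textbf{n}(2))$. The main obstacle I anticipate is ensuring that the bisection procedure is not disrupted by near-coincident roots; this is exactly where Rump's explicit lower bound on $\Delta$ earns its keep, since on the event $\Delta>0$ (which is the complement of a Lebesgue-null set in $\Theta\times\Theta$) the plug-in separation $\hat\Delta$ stays bounded away from zero a.s., so the subdivision width $\hat\Delta+\epsilon$ remains a valid upper bound for $|I_u|$ uniformly in $\textbf{n}$ large.
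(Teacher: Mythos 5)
Your proposal is correct and follows essentially the same route as the paper: Lagrange bound for root containment, Rump's minimal-distance bound to guarantee each subdivision interval contains at most one root, sign-change detection plus $c_n\to\infty$ bisections to drive $\bar{\hat\lambda}_v\to\hat\lambda_v\to\lambda_v$ a.s., then continuity of $\lambda\mapsto(\hat M^{i,j})^{-1}(\lambda)\hat C^{i,j}(\lambda)$ and the finite argmin comparison to conclude. The only additions you make — spelling out the joint continuity away from $\det\hat M^{i,j}(\lambda)=0$ and why the argmin step is stable — are exactly the steps the paper leaves implicit, so the substance coincides.
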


Algorithm~\ref{alg:bisection} summarizes exactly how to compute the approximate estimator given $n_{A_i}$ samples from environment $1\le i\le k$.

\begin{algorithm}\label{alg:bisection}
	Algorithm for computing the approximate estimator 
	\begin{itemize}
		\item
		Compute the inflexion point of to every $R_{A_i}(\beta)$ which is given by $\left(G^i_++\gamma G^i_\Delta\right)^{-1}\left(Z^i_++\gamma Z^i_\Delta\right)$.
		\item
		For every $1\le i<j\le k$ such that $\mathbb{R}_{A_i}(\beta)$ and $\mathbb{R}_{A_j}(\beta)$ intersect, compute all the roots to 
		$$\hat{\tilde{P}}(\lambda)=\det\left(\hat{M}^{i,j}(\lambda)\right)^2\hat{g}(\beta(\lambda)),$$ 
		(where $\hat{g}(\beta(\lambda))$ is given by \eqref{groots}) using the bisection method outlined above. For every such root $\lambda$, compute $\beta(\lambda)=\left(M^{i,j}\right)^{-1}(\lambda)C^{i,j}(\lambda)$
		\item
		The $\arg\min$ solution is now given by the $\beta$ amongst the ones computed above that minimizes 
		$$(1+\tau)\left(\hat{R}_{A_1}(\beta)\vee...\vee \hat{R}_{A_k}(\beta)\right)-\tau \hat{R}_O(\beta).$$
	\end{itemize}
\end{algorithm}

\section{Conclusions}
In this manuscript, we derived the optimal out-of-sample risk-minimizing predictor for a specific target within a nonlinear system. The system is  trained across various within-sample environments, consisting of an observational and several shifted environments, each corresponding to a nonlinear structural equation model (SEM) with its unique shift and noise vector, both in $L^2$. Unlike previous methodologies, we also allowed for shifts in the target value. We established a sieve of out-of-sample environments, encompassing all shifts $\tilde{A}$ that are at most $\gamma$ times as strong as any weighted average of observed shift vectors. For each $\beta\in\R^p$, we demonstrate that the supremum of the risk functions $R_{\tilde{A}}(\beta)$ can be decomposed into a (positive) non-linear combination of risk functions. Subsequently, we defined the risk minimizer, $\beta_\gamma$, as the argument $\beta$ that minimizes this risk. The main finding of this paper is that the risk minimizer can be consistently estimated using an estimator outside a set of zero Lebesgue measure in the parameter space. An inherent challenge in such estimation lies in solving a general-degree polynomial, lacking an explicit solution. We resolved this by establishing an approximate estimator, that is consistent, employing the bisection method.

\bibliographystyle{biometrika}
\bibliography{bibliopaper}

\newpage
\appendix

	
	\section{Proof of Theorem 1}
	Before the proof of Theorem \ref{optimalregthm} we will need to construct a sizeable toolbox in the form of several Lemmas that will now follow.
	For each $\theta\in\Theta$ we may bijectively associate a pairing of an affine (and symmetric) matrix function and an affine row vector as follows.
	\begin{def}\label{parammatrix}
		For every $\theta_{i,j}\in\Theta\times\Theta$ let the \textit{affine covariate matrix},  $M^{i,j}(\lambda)$ and the \textit{affine target vector} $C^{i,j}(\lambda)$ be defined element wise by
		$$C^{i,j}(\lambda)_u=b_i(u)-\lambda\left(b_i(u)-b_j(u)\right),$$
		$$M^{i,j}(\lambda)_{l,l} =a_i(u)-\lambda(a_i(u)-a_j(u))).$$
		When $u\not=v$
		$$M^{i,j}(\lambda)_{u,v} =a_i(u,v)-\lambda \left( a_i(u,v)-a_j(u,v)\right).$$
	\end{def}
	We may regard $\det\left(M^{i,j}(\lambda)\right)$ as polynomial in $\R$ with coefficients in $\Theta\times\Theta$. Doing so we will denote the roots in terms of $\lambda$ as $\Lambda(\theta_{i,j})$ where we highlight the dependence on $\theta_{i,j}\in\Theta\times\Theta$. We will denote the real roots of $\det\left(M^{i,j}(\lambda)\right)$ as $\Lambda(\theta_{i,j})_\R$. Often times we will suppress the dependence on $\theta_{i,j}$ for brevity when we see fit.
	The following result comes from complex analysis.
	\begin{lemma}\label{realsimple}
		Let $P(\lambda)=\sum_{u=0}^ma_u\lambda^u$ be a polynomial with real coefficients ($a_m\not=0$) with a real simple root $r$. For any $0<\epsilon<\min_{1\le u<z\le v}|r_u-r_z|$ there exists $\delta>0$ such that if we consider any polynomial of the form $\tilde{P}(\lambda)=\sum_{u=0}^mb_u\lambda^u$ with $|a_u-b_u|<\delta$ then $\tilde{P}$ must have a simple real root in $(r_u-\epsilon,r_u+\epsilon)$.
	\end{lemma}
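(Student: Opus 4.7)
The plan is to prove this as a standard continuity-of-roots statement via Rouch\'e's theorem in the complex plane, since that handles both the existence of a nearby root and its simplicity in one stroke. First I would set up: let $r_1,\ldots,r_v$ denote the (complex) roots of $P$, with the distinguished real simple root being $r=r_u$, and fix $\epsilon$ satisfying the hypothesis so that $r$ is the unique zero of $P$ in the closed complex disk $\overline{D}_\epsilon=\{z\in\mathbb{C}:|z-r|\le\epsilon\}$, with multiplicity one. Pick an auxiliary $\epsilon'\in(0,\epsilon)$ so that $\overline{D}_{\epsilon'}$ still has $r$ as its only zero of $P$.

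Next I would quantify how close $\tilde{P}$ must be to $P$. Since $P$ has no zeros on the compact circle $\partial \overline{D}_{\epsilon'}$, the quantity $m:=\min_{z\in\partial \overline{D}_{\epsilon'}}|P(z)|$ is strictly positive. A trivial estimate gives
\[
\sup_{z\in\partial \overline{D}_{\epsilon'}}|\tilde{P}(z)-P(z)|\;\le\;\sum_{u=0}^{m}|a_u-b_u|\,(|r|+\epsilon')^u,
\]
so one can choose $\delta>0$ small enough that $|a_u-b_u|<\delta$ for all $u$ forces the right-hand side to be strictly less than $m$. Then $|\tilde{P}(z)-P(z)|<|P(z)|$ on $\partial \overline{D}_{\epsilon'}$, and Rouch\'e's theorem yields that $\tilde{P}$ and $P$ have the same number of zeros, counted with multiplicity, inside $\overline{D}_{\epsilon'}$, namely exactly one with multiplicity one; so the unique zero $\tilde{r}\in\overline{D}_{\epsilon'}$ of $\tilde{P}$ is simple.

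Finally I would deduce that $\tilde{r}$ is real. Because $\tilde{P}$ has real coefficients, its zero set is closed under complex conjugation; because $r$ is real, the disk $\overline{D}_{\epsilon'}$ is symmetric about the real axis. A non-real zero would thus come with a distinct conjugate zero, both inside $\overline{D}_{\epsilon'}$, contradicting the count of one. Hence $\tilde{r}\in\mathbb{R}$, and since $|\tilde{r}-r|\le\epsilon'<\epsilon$ we get $\tilde{r}\in(r-\epsilon,r+\epsilon)$, as required.

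I do not expect a genuine obstacle here: the argument is standard and the only mild care is verifying that the perturbation bound on $\partial \overline{D}_{\epsilon'}$ is indeed uniform and linear in the coefficient perturbations, which the explicit estimate above supplies. A purely real-variable alternative (use that $P'(r)\neq 0$ to get $P$ strictly monotone on a sub-interval around $r$, then apply the intermediate value theorem to $\tilde{P}$ and exploit $\tilde{P}'\neq 0$ nearby to conclude simplicity) would work just as well, but Rouch\'e packages the existence, reality, and simplicity more cleanly.
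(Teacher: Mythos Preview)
Your Rouch\'e argument is correct and is precisely the standard complex-analysis route the paper gestures at: the paper does not actually prove this lemma but simply states that ``the following result comes from complex analysis'' and moves on. So your proposal supplies the details the paper omits, and the approaches are aligned rather than different.

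One cosmetic point: you reuse the symbol $m$ for both the degree of $P$ and the minimum $\min_{z\in\partial\overline{D}_{\epsilon'}}|P(z)|$; rename the latter (say $\mu$) to avoid a clash. Also, Rouch\'e already places the unique zero in the \emph{open} disk $D_{\epsilon'}$, so the auxiliary $\epsilon'<\epsilon$ is not strictly needed to land in the open interval $(r-\epsilon,r+\epsilon)$, though it does no harm.
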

	The above result is then applied to get the following lemma which is what we actually will need.
	\begin{lemma}\label{simpleroots}
		Let $P(\lambda)=\sum_{u=0}^ma_u\lambda^u$ be a polynomial with real coefficients ($a_m\not=0$) and whose real roots $r_1<...<r_v$ ($v\le m$) are simple. If $P_n(\lambda)=\sum_{u=0}^ma_{u,n}\lambda^u$ is a sequence of polynomials such that $a_{u,n}\to a_u$, $u=0,...,m$ then if we denote the real roots of $P_n(\lambda)$ as $r_1(n)<...<r_{w_n}(n)$ then $w_n=v$ for large enough $n$ and $r_u(n)\to r_u$ for $u=1,...,v$.
	\end{lemma}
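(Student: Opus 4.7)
The plan is to combine Lemma~\ref{realsimple} (which already gives existence of nearby simple roots) with two standard facts about polynomials depending continuously on their coefficients: a uniform a priori bound on the location of the roots, and uniform convergence $P_n\to P$ on compact sets. These three ingredients together will force $w_n=v$ for $n$ large and drive each $r_u(n)$ into an arbitrarily small neighborhood of $r_u$.

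First I would fix $\epsilon>0$ smaller than $\tfrac12\min_{u<z}|r_u-r_z|$ (vacuous if $v\le 1$) and also small enough that $P'\neq 0$ on each closed interval $[r_u-\epsilon,r_u+\epsilon]$; the latter is possible because simplicity of $r_u$ gives $P'(r_u)\neq 0$ and $P'$ is continuous. Applying Lemma~\ref{realsimple} at each $r_u$ produces $\delta_u>0$, and setting $\delta=\min_u\delta_u>0$, the coefficient convergence $a_{u,n}\to a_u$ implies that for all $n$ sufficiently large, $P_n$ has a simple real root inside every interval $(r_u-\epsilon,r_u+\epsilon)$. This already gives $w_n\ge v$ together with canonical candidates $r_u(n)$ that converge to $r_u$.

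Next I would exclude any further real roots of $P_n$. Since $a_{m,n}\to a_m\neq 0$, we have $|a_{m,n}|\ge |a_m|/2$ for all large $n$, and the Cauchy (or Lagrange) bound places every real root of $P_n$ inside a fixed interval $[-M,M]$ uniformly in $n$. On the compact set $K=[-M,M]\setminus\bigcup_{u=1}^v(r_u-\epsilon,r_u+\epsilon)$ the polynomial $P$ has no real roots, so $|P|\ge c>0$ on $K$. Because coefficient convergence yields uniform convergence of $P_n\to P$ on $[-M,M]$, we have $|P_n|\ge c/2$ on $K$ for all large $n$; hence every real root of $P_n$ must lie in $\bigcup_u(r_u-\epsilon,r_u+\epsilon)$. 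Uniqueness inside each such neighborhood then follows from the second condition on $\epsilon$: since $P_n'\to P'$ uniformly on $[r_u-\epsilon,r_u+\epsilon]$ and $P'$ is bounded away from zero there, so is $P_n'$ for large $n$, making $P_n$ strictly monotone on that interval and thus containing at most one real root.

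Combining the three steps yields exactly $w_n=v$ and $|r_u(n)-r_u|<\epsilon$ for all $n\ge N(\epsilon)$; letting $\epsilon\downarrow 0$ concludes $r_u(n)\to r_u$. I do not foresee any substantial obstacle: every piece is either supplied by Lemma~\ref{realsimple} or is a classical continuity-and-compactness argument, and the only care required is coordinating the choice of $\epsilon$ (small enough to separate the $r_u$ and to stay inside $\{P'\neq 0\}$) with the Cauchy bound so that ``far from the $r_u$'' reduces to a single compact set on which $P_n\to P$ uniformly.
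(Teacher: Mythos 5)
Your argument is correct, and it takes a genuinely different route from the paper's. The paper proves $w_n\le v$ by appealing to continuity of the \emph{complex} roots of a polynomial as a function of its coefficients: it fixes a non-real root $r$ of $P$ with $\mathrm{Im}(r)=c\neq 0$, notes that for large $n$ the nearby root $r'(n)$ of $P_n$ satisfies $|r-r'(n)|<c/2$ and hence is also non-real, and then counts (degree of $P_n$ equals degree of $P$ for large $n$) to conclude $P_n$ has the same number of non-real roots as $P$. You instead stay entirely in the real line: a uniform Cauchy/Lagrange bound traps all real roots of $P_n$ in a fixed compact interval, uniform convergence of $P_n$ to $P$ on that interval forces $|P_n|$ to stay bounded away from zero off the $\epsilon$-neighborhoods of the $r_u$, and uniform convergence of $P_n'$ to $P'$ (together with $P'(r_u)\neq 0$) makes $P_n$ strictly monotone on each such neighborhood, so each contains at most one real root. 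Your approach is more elementary in the sense that it avoids invoking the general theorem on continuity of complex roots (and its implicit bookkeeping of multiplicities for the non-real roots of $P$, which the lemma does not assume to be simple), at the modest cost of one extra continuity-and-compactness step and the observation about $P'$. Both arguments are sound; yours is arguably the cleaner one for a lemma stated purely in terms of real roots.
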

	\begin{proof}
		We know by Lemma \ref{realsimple} that for any $0<\epsilon<\min_{1\le u<z\le v}|r_u-r_z|$ there exists $\delta>0$ such that if $|a_u-a_{u,n}|<\delta$ then $P_n$ must have a simple real root in $(r_u-\epsilon,r_u+\epsilon)$, which shows that $w(n)\ge v$ for large enough $n$ and that $r_u(n)\to r_u$ for $u=1,...,v$. If $v=m$ we are done, since then all roots are real and simple. If $v=m$ we are done. Suppose instead that $v<m$. We know that all roots of $P_n$ must converge to those of $P$ so take any root $r$ of $P$ with non-zero imaginary part $c=\mathrm{Im}(r)$ then we know that for large enough $n$, $P_n$ must have a root $r'(n)$ such that $|r-r'(n)|<c/2$ which implies that the imaginary part of $r'(n)$ is non-zero for such $n$. But this is true for all roots of $P$ that are not real and since $P_n$ has the same degree as $P$ for large enough $n$ (when $a_{m,n}\not=0$), $P_n$ must have the same number of roots that are not real as $P$, i.e. $w_(n)=v$ for such $n$.
	\end{proof}
	A result from measure theory which will lie at the heart of the method by which we prove Theorem \ref{optimalregthm} is the following.
	\begin{lemma}\label{PolyLemma}
		A polynomial on $\R^n$, for any $n\in\N$ is either identically zero or is only zero on set of Lebesgue measure zero.
	\end{lemma}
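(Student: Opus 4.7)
The plan is to proceed by induction on $n$, using Fubini's theorem to reduce the $n$-dimensional case to the $(n-1)$-dimensional case together with the one-variable base case.

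For the base case $n=1$, I would simply invoke the fundamental theorem of algebra (or more elementarily, the factor theorem): a non-zero polynomial on $\R$ of degree $d$ has at most $d$ real roots, so its zero set is finite and therefore of Lebesgue measure zero.

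For the inductive step, assume the statement holds on $\R^{n-1}$ and let $P \not\equiv 0$ be a polynomial on $\R^n$. I would write
\begin{equation*}
P(x_1,\ldots,x_n)=\sum_{k=0}^d a_k(x_1,\ldots,x_{n-1})\,x_n^k,
\end{equation*}
where each $a_k$ is a polynomial on $\R^{n-1}$ and, since $P \not\equiv 0$, at least one of the coefficients (say $a_d$ with the convention that $d$ is the largest index for which $a_k \not\equiv 0$) is not identically zero. By the inductive hypothesis applied to $a_d$, the set $N=\{x'\in\R^{n-1}:a_d(x')=0\}$ has $(n-1)$-dimensional Lebesgue measure zero. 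For every $x'\notin N$, the univariate polynomial $x_n\mapsto P(x',x_n)$ has degree exactly $d$ and is therefore non-zero, so by the base case its zero set in $\R$ has one-dimensional measure zero.

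Let $Z=\{x\in\R^n:P(x)=0\}$. Writing $\mathbf{1}_Z$ for its indicator and applying Tonelli's theorem,
\begin{equation*}
\lambda_n(Z)=\int_{\R^{n-1}}\left(\int_{\R}\mathbf{1}_Z(x',x_n)\,dx_n\right)dx'=\int_{N}(\cdots)\,dx'+\int_{\R^{n-1}\setminus N}0\,dx'=0,
\end{equation*}
where the inner integral is zero off $N$ by the previous paragraph, and the integral over $N$ is zero because $\lambda_{n-1}(N)=0$. This completes the induction. The only subtle point, which I would handle carefully, is justifying that $Z$ is (Borel) measurable so that Tonelli applies; this is immediate because $P$ is continuous, so $Z=P^{-1}(\{0\})$ is closed.
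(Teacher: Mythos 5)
Your proof is correct. The paper states Lemma \ref{PolyLemma} without proof, treating it as a known fact from measure theory, so there is no paper argument to compare against; your induction-plus-Tonelli argument is the standard textbook proof, and it is complete. The base case is right, the choice of $d$ as the largest index with $a_d\not\equiv 0$ correctly guarantees $P(x',\cdot)$ has degree exactly $d$ off the null set $N$, and you have properly addressed measurability of $Z$ so that Tonelli applies. One very minor stylistic point: in the displayed computation you write $\int_N(\cdots)\,dx'$ as if its value were to be determined, but you could just as well note directly that $\int_{\R^{n-1}}\bigl(\int_\R\mathbf{1}_Z\,dx_n\bigr)\,dx'\le\int_{\R^{n-1}}\infty\cdot\mathbf{1}_N\,dx'=0$ by the convention $0\cdot\infty=0$, which avoids the slight informality of the placeholder. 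This does not affect correctness.
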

	The following lemma is an immediate application of the one above and illustrates how we will apply the above lemma in this paper.
	\begin{lemma}\label{discriminantLemma}
		Any non-zero polynomial $P_{\theta_{i,j}}\left(\lambda\right)$ on $\R\times\Theta\times\Theta$ has simple roots (in terms of $\lambda$) outside a set of measure zero in $\Theta\times\Theta$.
	\end{lemma}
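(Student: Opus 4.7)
The plan is to reduce the statement to Lemma \ref{PolyLemma} by studying the discriminant of $P_{\theta_{i,j}}$ with respect to $\lambda$. Write $P_{\theta_{i,j}}(\lambda)=\sum_{u=0}^m a_u(\theta_{i,j})\lambda^u$ where each coefficient $a_u$ is a polynomial in the entries of $\theta_{i,j}\in\Theta\times\Theta$. For a fixed $\theta_{i,j}$ with $a_m(\theta_{i,j})\neq 0$, the polynomial $\lambda\mapsto P_{\theta_{i,j}}(\lambda)$ has a multiple root if and only if its discriminant $D(\theta_{i,j})=(-1)^{m(m-1)/2}a_m(\theta_{i,j})^{-1}\mathrm{Res}_\lambda\bigl(P_{\theta_{i,j}},\partial_\lambda P_{\theta_{i,j}}\bigr)$ vanishes. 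Since the resultant is itself a universal polynomial expression in the coefficients, the quantity $\tilde{D}(\theta_{i,j}):=\mathrm{Res}_\lambda(P,\partial_\lambda P)$ is a genuine polynomial on $\Theta\times\Theta$.

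The key steps, in order, are: first, show that $\tilde{D}$ is not identically zero as an element of $\mathbb{R}[\Theta\times\Theta]$; second, handle the degenerate locus $\{a_m=0\}$; and third, invoke Lemma \ref{PolyLemma}. For step one, I view $P$ as a polynomial in $\lambda$ with coefficients in the domain $R=\mathbb{R}[\Theta\times\Theta]$ and recall that $\mathrm{Res}_\lambda(P,\partial_\lambda P)$ vanishes in $R$ exactly when $P$ and $\partial_\lambda P$ have a nontrivial common factor in $\mathbb{R}(\Theta\times\Theta)[\lambda]$, equivalently when $P$ fails to be squarefree in $\lambda$ over the quotient field. The cleanest verification that this does not happen is to exhibit a single explicit $\theta^*\in\Theta\times\Theta$ at which $P_{\theta^*}$ is a polynomial of degree $m$ in $\lambda$ with $m$ distinct roots; this instantly forces $\tilde{D}(\theta^*)\neq 0$ and hence $\tilde{D}\not\equiv 0$. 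Step two is immediate because $a_m$ is itself a non-zero polynomial on $\Theta\times\Theta$ (otherwise $P$ would not have $\lambda$-degree $m$), so by Lemma \ref{PolyLemma} the set $\{a_m=0\}$ has Lebesgue measure zero. Step three follows by applying Lemma \ref{PolyLemma} to $\tilde{D}$: the set $\{\tilde{D}=0\}$ has measure zero, and outside the union of these two null sets, $P_{\theta_{i,j}}$ has degree $m$ with only simple roots.

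The main obstacle is step one, namely establishing that $\tilde{D}$ is not the zero polynomial on $\Theta\times\Theta$. As stated, ``non-zero polynomial'' is not by itself sufficient: a polynomial of the form $(\lambda-q(\theta))^2\cdot h(\lambda,\theta)$ is nonzero in $\mathbb{R}[\mathbb{R}\times\Theta\times\Theta]$ yet its $\lambda$-discriminant vanishes identically. The statement therefore implicitly requires that $P$ be squarefree in $\lambda$ over $\mathbb{R}(\Theta\times\Theta)$. For the concrete polynomials to which this lemma is later applied in the proof of Theorem \ref{optimalregthm} — for instance $\det\bigl(M^{i,j}(\lambda)\bigr)$ and the auxiliary polynomials $Q(\theta,\lambda)$ and $\tilde P(\lambda)$ appearing in the root-counting and resultant arguments — this squarefreeness is verified by producing an explicit $\theta^*$ at which the coefficients (which run over free second-moment parameters $a_i(l),a_i(l_1,l_2),b_i(l),c_i$) can be independently tuned so that the resulting univariate polynomial in $\lambda$ has $m$ distinct real roots.
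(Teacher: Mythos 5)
Your proposal mirrors the paper's own strategy---take the $\lambda$-discriminant of $P$, observe that it is a polynomial on $\Theta\times\Theta$, and invoke Lemma \ref{PolyLemma}---but you have also correctly identified a genuine gap in both the lemma's statement and the paper's one-line proof. As you note, the hypothesis that $P$ is merely a non-zero polynomial on $\R\times\Theta\times\Theta$ is not sufficient: the polynomial $P(\lambda,\theta)=\bigl(\lambda-q(\theta)\bigr)^2h(\lambda,\theta)$, or even just $P(\lambda,\theta)=\lambda^2$, is non-zero as an element of $\R[\R\times\Theta\times\Theta]$, yet its $\lambda$-discriminant vanishes identically, so it has a multiple root for every $\theta$. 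The paper's proof simply asserts, parenthetically, that the discriminant is a \emph{non-zero} polynomial on $\Theta\times\Theta$, but that is exactly what needs proof and is false for an arbitrary non-zero $P$. The correct fix, which you supply, is to strengthen the hypothesis to ``$P$ is squarefree in $\lambda$ over $\R(\Theta\times\Theta)$'' (equivalently, $\mathrm{Res}_\lambda(P,\partial_\lambda P)\not\equiv 0$), and to verify this separately in each application by exhibiting one explicit $\theta^*$ at which $P_{\theta^*}$ has the maximal number of distinct roots. This is in fact the technique the paper uses elsewhere---non-vanishing of resultants in Lemmas \ref{matrixLemma} and \ref{matrixNoTiesLemma} is established via the explicit matrices $M'$ and $\tilde{M}$---but that verification is missing here, most notably for $\tilde P(\lambda)=\det\bigl(M^{i,j}(\lambda)\bigr)^2 g(\beta(\lambda))$ in Step~4 of the main proof, where the lemma is invoked without producing such a $\theta^*$. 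Your handling of the degenerate locus $\{a_m=0\}$ (so that the $\lambda$-degree does not drop) is also a needed refinement the paper omits.
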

	\begin{proof}
		$P$ only has simple roots if the discriminant is non-zero. The discriminant is a (non-zero) polynomial on $\Theta\times\Theta$ and therefore is only zero on a set of measure zero in $\Theta$.
	\end{proof}
	We will from now on omit the dependence on $\theta_{i,j}$ of polynomials of the kind above. The reason being that we regard it as polynomial on $\R$ with coefficients in $\Theta\times\Theta$ and we are interested in the roots in terms of $\lambda$.
	\begin{lemma}\label{matrixLemma}
		The following results holds for the affine parameter matrix $M^{i,j}(\lambda)$ as defined in \ref{parammatrix}.
		\begin{itemize}
			\item [1] Outside a set $N$ of measure zero in $\Theta$, there are at most $p$ elements in $\Lambda$ and for any $\lambda\in\Lambda$,
			$rank\left(M^{i,j}(\lambda)\right)=p-1$.
			\item[2] Moreover for any such $\lambda$ there is a unique set of $p-1$ real numbers $s_1(\theta,\lambda),...,s_{p-1}(\theta,\lambda)$, all of which must be non-zero and such that 
			\begin{align}\label{coeffrepreq}
				&M^{i,j}_{p,.}(\lambda)=\sum_{u=1}^{p-1}s_u(\theta,\lambda)M^{i,j}_{u,.}(\lambda),
			\end{align}
			outside $N$.
		\end{itemize}
		
	\end{lemma}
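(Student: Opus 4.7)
The plan is to treat $P(\lambda) := \det(M^{i,j}(\lambda))$ as a polynomial of degree at most $p$ in $\lambda$ whose coefficients are polynomials in $\theta_{i,j} \in \Theta \times \Theta$. Writing $M^{i,j}(\lambda) = A + \lambda(B - A)$, where $A$ and $B$ are symmetric matrices whose entries are coordinates of $\theta_{i,j}$, the leading coefficient in $\lambda$ is $\det(B - A)$, which is a non-trivial polynomial on $\Theta \times \Theta$ (witness: any $\theta_{i,j}$ with $B - A$ invertible). By Lemma \ref{PolyLemma} this coefficient is non-zero outside a null set $N_0$, so $|\Lambda(\theta_{i,j})| \le p$ there.

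To prove $\mathrm{rank}(M^{i,j}(\lambda)) = p-1$ at every $\lambda \in \Lambda$, note first that $\mathrm{rank} < p$ is automatic at a root. For $\mathrm{rank} \ge p-1$, I would exhibit one $(p-1)\times(p-1)$ minor $Q(\theta,\lambda)$ whose univariate resultant $\mathrm{Res}_\lambda(P,Q)$ is not the zero polynomial on $\Theta \times \Theta$. At any $\theta_{i,j}$ where this resultant is non-zero, $P$ and $Q$ share no $\lambda$-root, so $Q(\lambda_0) \ne 0$ at every $\lambda_0 \in \Lambda$, forcing $\mathrm{rank}(M^{i,j}(\lambda_0)) \ge p-1$. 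Non-triviality is verified by a single explicit specialisation; e.g.\ take $A = I_p$ and $B$ symmetric with $p$ distinct eigenvalues, so $M^{i,j}(\lambda) = (1-\lambda)I + \lambda B$ and the roots of $P$ are $\lambda_0 = 1/(1-\mu_l)$ for eigenvalues $\mu_l \ne 1$ of $B$; a generic such $B$ makes the chosen minor non-zero at every root. Lemma \ref{PolyLemma} applied to $\mathrm{Res}_\lambda(P, Q)$ then removes an additional null set $N_1$.

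For the second statement, granted $\mathrm{rank} = p-1$ at each $\lambda \in \Lambda$, the left null space of $M^{i,j}(\lambda)$ is one-dimensional; choose the spanning vector $n(\lambda)$ whose components are the signed $(p-1)\times(p-1)$ cofactors obtained by deleting each column from a fixed row. The $p$-th row of $M^{i,j}(\lambda)$ lies in the span of the first $p-1$ rows iff $n_p(\lambda) \ne 0$, in which case $s_u(\theta,\lambda) = -n_u(\lambda)/n_p(\lambda)$ and the $s_u$ are unique (since $n_p \ne 0$ together with $\mathrm{rank} = p-1$ forces rows $1,\ldots,p-1$ to be independent). Thus $s_u \ne 0$ iff $n_u(\lambda) \ne 0$, and since each $n_u$ is a polynomial in $\lambda$ and $\theta$, it suffices to show, for every $u \in \{1,\ldots,p\}$, that $\mathrm{Res}_\lambda(P, n_u)$ is not identically zero on $\Theta \times \Theta$. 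The same specialisation works: in the model $M^{i,j}(\lambda_0) = \lambda_0(B - \mu_l I)$ the null space at a root is spanned by an eigenvector of $B$, and for generic symmetric $B$ every eigenvector has all non-zero coordinates. Intersecting the finitely many null sets produced in this way (one per minor and per $u$) yields the exceptional set $N$ of the lemma.

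The main obstacle is the non-triviality step: one must certify, with concrete witnesses, that every relevant resultant is a non-zero polynomial on $\Theta \times \Theta$, since Lemma \ref{PolyLemma} does the rest mechanically. A single sufficiently generic choice of $B$ (distinct eigenvalues, eigenvectors with no zero entries) simultaneously certifies all of these resultants, so the bookkeeping reduces to showing that this eigenvector property is itself a Zariski-open condition on the space of symmetric matrices, which it is.
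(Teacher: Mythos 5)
Your proposal is correct and rests on the same core machinery as the paper: treat the relevant determinantal expressions as polynomials on $\Theta\times\Theta$, reduce all root-coincidence statements to the non-vanishing of a univariate resultant $\mathrm{Res}_\lambda(\cdot,\cdot)$, certify non-triviality with a single explicit specialisation, and discard the resulting null set via Lemma~\ref{PolyLemma}. Where you genuinely differ is in the details, and in each case your version is a little tighter. First, you explicitly check that the leading coefficient $\det(B-A)$ of $P$ is generically non-zero, so that $P$ does not collapse in degree and the ``at most $p$ roots'' claim is unconditional; the paper invokes the fundamental theorem of algebra without ruling out degeneration. Second, your witness ($A=I_p$, $B$ a generic symmetric matrix with distinct eigenvalues, so that $M^{i,j}(\lambda_0)=\lambda_0(B-\mu_l I)$ at each root) is consistent with the required symmetry of $M^{i,j}(\lambda)$, whereas the paper's matrix $M'$ used to certify Part~1 is asymmetric and hence not actually realisable as an $M^{i,j}(\lambda)$ — harmless for the resultant's non-vanishing on the ambient coordinate space, but your choice is cleaner. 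Third, and most substantively, for Part~2 you work with the full cofactor null vector $n(\lambda)$ (the left and right null spaces coincide by symmetry) and certify $n_u(\lambda)\neq 0$ for \emph{all} $u\in\{1,\ldots,p\}$, including $u=p$. The condition $n_p\neq 0$ is exactly what guarantees that $M^{i,j}_{p,.}(\lambda)$ lies in the span of the first $p-1$ rows and that those rows are independent, i.e., that the decomposition \eqref{coeffrepreq} exists and is unique with all $s_u\neq 0$. The paper's argument controls only the first $p-1$ cofactors along column~$1$, which yields $s_u\neq 0$ but leaves the existence/uniqueness of the decomposition implicit; your version closes that gap explicitly. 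The one point to make more precise when writing this up is the claim that, for generic symmetric $B$ with distinct eigenvalues, every eigenvector has all non-zero coordinates and every leading principal submatrix of $B-\mu_l I$ is invertible at each root: both are indeed Zariski-open and non-empty conditions (the latter via Cauchy interlacing), but the non-emptiness deserves a sentence rather than an appeal to genericity alone.
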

	\begin{proof}
		Denote $P(\lambda)=\det\left(M^{i,j}(\lambda)\right)$, by the fundamental theorem of algebra $P$ has at most $p$ distinct solutions in $\mathbb{C}$. The rank of $M^{i,j}(\lambda)$ is $p-1$ if and only if there exists some non-zero minor, that is to say there exists a minor that has no roots in common with $P$. Consider a submatrix $M_{u,v}(\lambda)$ (where the subscripts $u,v$ indicate that we remove row $u$ and column $v$) of $M^{i,j}(\lambda)$, and let $P_{u,v}(\lambda)=\det\left(M_{u,v}(\lambda)\right)$. We wish to show that outside a set of measure zero in $\Theta\times\Theta$, there exists a minor such that $P$ and $P_{u,v}$ has no roots in common. $P$ and $P_{u,v}$ has a root in common root in $\mathbb{C}$ if and only if their resultant is zero (since $\mathbb{C}$ is an algebraically closed field). But the resultant is a polynomial on $\Theta\times\Theta$ so by Lemma \ref{PolyLemma} this polynomial is either the zero polynomial or it is zero on a set of measure zero in $\Theta\times\Theta$. So to prove [1], it suffices to show that there exists any $\theta\in\Theta\times\Theta$ such that the resultant of $P$ and $P_{u,v}$ is not zero. Recall however that this is true if and only if for some $\theta$, $P(\lambda)=0$ implies $P_{u,v}(\lambda)\not=0$ for some $u,v\in\{1,...,p\}$ and all $\lambda\in\R$. For this purpose consider the matrix $M'(\lambda)$ where 
		%
		\begin{equation*}
			M'(\lambda) = 
			\begin{pmatrix}
				\lambda & \lambda-1 &0 & \cdots & 0 \\
				0 & 1 & 0 & \cdots & 0  \\
				\vdots  & \vdots  & \ddots & \vdots & \vdots \\
				0 & \cdots & 0 & 1 & 0  \\
				0 & \cdots & 0 & 0 & 1
			\end{pmatrix}
		\end{equation*}
		In this case the only root to $P$ is given by $\lambda=0$ while the only root for $P_{1,2}$ is given by $\lambda=1$. This proves the [1]. For the final result, it is enough to show that every first minor of the form $\det\left(M^{i,j}_{u,1}(\lambda)\right)$ for $u=1,...,p-1$ is non-zero since then $M^{i,j}_{p,.}(\lambda)$ must be linearly independent from all subsets of $n-2$ other rows. $\det\left(M^{i,j}_{1,u}(\lambda)\right)=0$ for a root of $P$ if and only if $res(P_{u,v},P)$ is zero. Again it suffices to show that for each $1\le u\le p-1$ there exists any $\theta\in\Theta\times\Theta$ such that $P$ and $P_{u,1}$ have no common root. Letting $\theta$ be such that $M^{i,j}(\lambda)$ is the diagonal matrix with $\lambda$ as diagonal element number $u$ and all other diagonal entries one implies that $P$ only has the root $0$ and $P_{u,1}$ has no roots at all. This completes the proof of the final claim.
	\end{proof}
	\begin{lemma}\label{matrixNoTiesLemma}
		The coefficients $\{s_u(\theta,\lambda)\}_{u=1}^{p-1}$ in Lemma \ref{matrixLemma} are rational functions on $\R\times\Theta\times\Theta$, whose roots in terms of $\lambda$ do not lie in $\Lambda$ outside some zero set in $\Theta\times\Theta$.
	\end{lemma}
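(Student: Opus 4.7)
The plan is to give $s_u$ an explicit representation via Cramer's rule and then transfer the nonvanishing assertion of Lemma \ref{matrixLemma} part 2 into the stated rational-function form. I would first restrict the row-vector identity \eqref{coeffrepreq} to its entries in columns $2,\ldots,p$, producing a $(p-1)\times(p-1)$ linear system for $s=(s_1,\ldots,s_{p-1})$ whose coefficient matrix is the first minor $M^{i,j}_{p,1}(\lambda)$ and whose right-hand side is the last row of $M^{i,j}(\lambda)$ with its first entry deleted. Cramer's rule then yields
\[
s_u(\theta,\lambda)\;=\;(-1)^{p-1-u}\,\frac{\det M^{i,j}_{u,1}(\lambda)}{\det M^{i,j}_{p,1}(\lambda)},
\]
where both determinants are polynomials in $\lambda$ of degree at most $p-1$ whose coefficients are polynomials in $\theta$. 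To be sure this is a genuine rational function on $\R\times\Theta\times\Theta$ one must check that the denominator is not the zero polynomial; this is handled by exhibiting a single $\theta$, for instance one for which $M^{i,j}(0)=I_p+e_1e_p^T+e_pe_1^T$, which makes $M^{i,j}_{p,1}(0)$ a cyclic permutation matrix of determinant $\pm 1$.

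For the root claim I would note that the roots of $s_u(\theta,\cdot)$ lie in the zero set of the numerator $\det M^{i,j}_{u,1}(\lambda)$, so it suffices to show that, for $\theta$ outside a null set, the polynomials $P(\lambda):=\det M^{i,j}(\lambda)$ and $\det M^{i,j}_{u,1}(\lambda)$ have no common root in $\lambda$. Two univariate polynomials share a root in $\mathbb{C}$ precisely when their resultant vanishes, and this resultant is itself a polynomial on $\Theta\times\Theta$. By Lemma \ref{PolyLemma} it is either identically zero on $\Theta\times\Theta$ or zero only on a Lebesgue-null subset; the identically-zero alternative is ruled out directly by Lemma \ref{matrixLemma} part 2, which already asserts that $s_u(\theta,\lambda)\neq 0$ for every $\lambda\in\Lambda$ outside a null set in $\Theta\times\Theta$---equivalently, that $\det M^{i,j}_{u,1}(\lambda)\neq 0$ at every root of $P(\lambda)$ on a set of positive measure, so the resultant is not the zero polynomial. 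Taking the finite union over $u=1,\ldots,p-1$ preserves negligibility.

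The essential obstacle, and really the only non-routine point, is making the Cramer-rule representation globally well-defined on $\R\times\Theta\times\Theta$; this reduces to exhibiting one $\theta$ for which $\det M^{i,j}_{p,1}(\lambda)$ is not the zero polynomial in $\lambda$, handled above. Once that step is in place, both halves of the lemma follow from the algebraic machinery of Lemma \ref{PolyLemma} together with the genericity already established in Lemma \ref{matrixLemma}.
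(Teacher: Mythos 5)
Your Cramer's-rule route is genuinely different from the paper's argument and, in principle, cleaner: the paper proves rationality of $s_u$ by running a $\theta$-generic Gaussian elimination on $M^{i,j}(\lambda)$ (constructing transforms $T_1,\ldots,T_{p-1}$, checking at each step that the relevant pivot polynomial has no root in $\Lambda$ via a resultant, and reading off $s_u$ as products of the rational elimination coefficients), whereas you produce the closed-form $s_u(\theta,\lambda)=(-1)^{p-1-u}\det M^{i,j}_{u,1}(\lambda)/\det M^{i,j}_{p,1}(\lambda)$ directly from the restricted linear system. If completed, your version would be shorter and would give an explicit formula the paper never writes down.

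However, there is a real gap: the Cramer formula you write is only valid at those $\lambda$ for which $\det M^{i,j}_{p,1}(\lambda)\neq 0$, because the restriction of \eqref{coeffrepreq} to columns $2,\ldots,p$ produces a $(p-1)\times(p-1)$ system whose unique solution is $s$ only when that coefficient matrix is invertible. You only verify that $\det M^{i,j}_{p,1}$ is not the \emph{zero polynomial} (so the quotient is a well-defined rational function on $\R\times\Theta\times\Theta$). You do not verify that $\det M^{i,j}_{p,1}(\lambda)\neq 0$ for every $\lambda\in\Lambda(\theta)$, outside a null set of $\theta$. Without this, the identification of $s_u(\theta,\lambda)$ with the rational expression is unjustified precisely at the points $\lambda\in\Lambda$ that the lemma is about, and the subsequent inference ``$s_u\neq 0$ at $\lambda\in\Lambda$ is \emph{equivalent} to $\det M^{i,j}_{u,1}(\lambda)\neq 0$ at $\lambda\in\Lambda$'' is circular: that equivalence itself presupposes the Cramer representation holds there. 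Note that Lemma \ref{matrixLemma} part 2 and its proof establish $\det M^{i,j}_{u,1}(\lambda)\neq 0$ at $\lambda\in\Lambda$ only for $u=1,\ldots,p-1$; they say nothing about the $u=p$ minor.

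The fix is the same kind of resultant argument you already use for the numerator, applied once more to the pair $\bigl(P,\det M^{i,j}_{p,1}\bigr)$: exhibit a single admissible $\theta$ for which $P$ and $\det M^{i,j}_{p,1}$ have no common root in $\lambda$, conclude the resultant is not the zero polynomial on $\Theta\times\Theta$, and invoke Lemma \ref{PolyLemma}. Your proposed witness $M^{i,j}(0)=I_p+e_1e_p^T+e_pe_1^T$ does not by itself accomplish this (it fixes only $G_i$, and for some choices of $G_j$ the two polynomials \emph{do} share roots, e.g.\ common factors coming from the middle diagonal entries). Choosing in addition $G_j=I_p$ works: then $P(\lambda)=\lambda(2-\lambda)$ while $\det M^{i,j}_{p,1}(\lambda)=\pm(1-\lambda)$, which share no root. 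An alternative witness that avoids the issue entirely is $G_i=\mathrm{diag}(2,1,\ldots,1)$, $G_j=I_p+e_1e_p^T+e_pe_1^T$, giving $P(\lambda)=(2-\lambda)-\lambda^2$ with roots $\{1,-2\}$ and $\det M^{i,j}_{p,1}(\lambda)=\pm\lambda$ with root $\{0\}$. Once this additional resultant step is inserted, your proof is complete and is a tidier alternative to the paper's elimination-based argument.
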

	\begin{proof}
		For any root of $\lambda\in\Lambda$, let $T_1(M^{i,j}(\lambda))$ denote the matrix resulting from subtracting $\frac{M^{i,j}(\lambda)(u,1)}{M^{i,j}(\lambda)(1,1)}M^{i,j}_{1,.}$ from row number $u$, $u=2,...,p$ in $M^{i,j}(\lambda)$, for those $\theta\in\Theta\times\Theta$ such that $M^{i,j}(\lambda)(1,1)\not=0$, $\forall \lambda\in\Lambda(\theta)$. For $1<v<p$, let $T_v(M^{i,j}(\lambda))$ denote the matrix resulting from subtracting $\frac{T_{v-1}(M^{i,j}(\lambda))(v,u)}{T_{v-1}(M^{i,j}(\lambda))(v,v)}T_{v-1}(M^{i,j}(\lambda)_{v,.})$ (whenever $T_{v-1}(M^{i,j}(\lambda))(v,v)\not=0$) from row number $u$, $u=v+1,...,p$ in $T_{v-1}(M^{i,j}(\lambda))$. Then outside a zero set in $\Theta\times\Theta$, $T_vM^{i,j}(\lambda)$ is well-defined for $v=1,...,p-1$ and $(T_vM^{i,j}(\lambda))_{p,.}=(0,...,0)$.
		The following procedure shows us that outside a zero set in $\Theta\times\Theta$ we may bring $M^{i,j}(\lambda)$ to a row echelon form in $p-1$ steps, without permuting any rows or columns while making the final row the only zero-row in this matrix. First we see that $d_1(\lambda)=(M^{i,j}(\lambda))(1,1)$ and $P(\lambda)$ only have a common root if their resultant is zero. As before it suffices to find $\theta\in\Theta\times\Theta$ such that $d_1(\lambda)\not=0$ for any $\lambda\in\Lambda(\theta)$. Consider the matrix $\tilde{M}$ with $\tilde{M}(l,l)=1$ for $l=1,...,p-1$, $\tilde{M}(p,p)=p-\lambda$, $\tilde{M}(p,l)=1$ for $l=1,...,p-1$, $\tilde{M}(1,l)=1$ for $l=1,...,p-1$ and zero on all other entries. 
		\begin{equation*}
			\tilde{M}(\lambda) = 
			\begin{pmatrix}
				1 & 0 & \cdots & 0 & 1 \\
				0 & 1 & \cdots & 0 & 1 \\
				\vdots  & \vdots  & \ddots & \vdots & \vdots \\
				0  & 0  & \cdots & 1 & 1  \\
				1 & 1 & \cdots & 1 & \lambda+p 
			\end{pmatrix}
		\end{equation*}
		Then $d_1\equiv1$ and $\Lambda(\theta)=\{0\}$ and this shows that $d_1$ and $P$ do not have common roots outside some zero set $N_1$, so that we may divide by $d_1$. On $N_1$, $\frac{1}{d_1}=\frac{1}{M^{i,j}(1,1)}$ is well defined and in order to clear the first column below row 1 we want to ensure that we do not clear the next element below on the diagonal ($(M^{i,j}(\lambda))(2,2)$) when doing so. We therefore have to check that $\frac{(M^{i,j}(\lambda))(2,1)}{(M^{i,j}(\lambda))(1,1)}(M^{i,j}(\lambda))(1,2)\not=(M^{i,j}(\lambda))(2,2)$, which is implied if $(M^{i,j}(\lambda))(2,1)(M^{i,j}(\lambda))(1,2)-(M^{i,j}(\lambda))(2,2)(M^{i,j}(\lambda))(1,1)\not=0$. $(M^{i,j}(\lambda))(2,1)(M^{i,j}(\lambda))(1,2)-(M^{i,j}(\lambda))(2,2)(M^{i,j}(\lambda))(1,1)$ is a polynomial on $\Theta\times\Theta$ and we wish to show it has no roots in common with $P$ outside a zero set $N_2\in\Theta\times\Theta$. It suffices to find a $\theta\in\Theta\times\Theta\setminus N_1$ such that the two polynomials do not have a root in common. As $\tilde{M}(2,1)\tilde{M}(1,2)-\tilde{M}(2,2)\tilde{M}(1,1)=-1$, the matrix $\tilde{M}$ still suffices for this purpose. This also shows that all entries in $T_1(M^{i,j}(\lambda))$ are rational functions on $\R\times\Theta\times\Theta$. Suppose now that $1<v\le p-1$, $(T_{v-1}(M^{i,j}(\lambda))(v,v)\not=0$, all entries in $(T_{v-1}(M^{i,j}(\lambda))$ are rational on $\R\times\Theta\times\Theta$  and
		\begin{align}\label{dia}
			&\frac{T_{v-1}(M^{i,j}(\lambda))(v,v-1)}{(T_{v-1}(M^{i,j}(\lambda))(v-1,v-1)}T_{v-1}\left(M^{i,j}(\lambda)\right)(v-1,v)\not=T_{v-1}\left(M^{i,j}(\lambda)\right)(v,v),
		\end{align}
		outside a zero set $N_{v-1}\in\Theta\times\Theta$. This means that we have successfully cleared everything below the main diagonal until the $v-1$:th element on the diagonal counting from the upper left corner. The elements of $(T_{v}(M^{i,j}(\lambda)))$ are obviously rational functions on $\R\times\Theta\times\Theta$ due to the induction hypothesis and the fact that the elements in $(T_{v}(M^{i,j}(\lambda)))$ are formed by products of rational functions in $(T_{v-1}(M^{i,j}(\lambda)))$. Moreover there are no poles in $\Lambda$, outside $N_{v-1}$ (this follows from \eqref{dia}), so the roots of $(T_{v}(M^{i,j}(\lambda))(v,v)$ coincides with those of its numerator polynomial, $Q_v$. To show $Q_v$ is not the zero polynomial take the above $\tilde{M}$ again and note that $(T_{v}(\tilde{M})(v,v)=\tilde{M}(v,v)$ for $v=1,...,p-1$. To show that
		$$\frac{T_{v}(M^{i,j}(\lambda))(v+1,v)}{(T_{v}(M^{i,j}(\lambda))(v,v)}T_v(M^{i,j}(\lambda))(v,v+1)\not=T_v(M^{i,j}(\lambda))(v+1,v+1),$$
		for $\lambda\in\Lambda$, it suffices to show that the numerator polynomial of 
		$$T_{v}(M^{i,j}(\lambda))(v+1,v)T_{v}(M^{i,j}(\lambda))(v,v+1)-T_{v}(M^{i,j}(\lambda))(v+1,v+1)T_{v}(M^{i,j}(\lambda))(v,v),$$
		does not have a root in common with $P$, i.e., their resultant is only zero on a zero set. For this purpose we can again recycle $\tilde{M}$
		where we have that 
		$$T_{v}(\tilde{M})(v+1,v)T_{v}(\tilde{M})(v,v+1)-T_{v}(\tilde{M})(v+1,v+1)T_{v}(\tilde{M})(v,v)=-1,$$
		similarly to before, when $v<p$. We then let $N'_{v}$ denote the zero set where either $Q_v(\lambda)=0$ or $\frac{T_{v}(M^{i,j}(\lambda))(v+1,v)}{(T_{v}(M^{i,j}(\lambda))(v,v)}M^{i,j}_{v,v+1}=M^{i,j}(v+1,v+1)$ and then let $N_v=N_1\cup....\cup N_{v-1}\cup N_v'$ which is the desired zero set. Note that there will be two rational functions $e_{p-1},e_p$ on $\R\times\Theta\times\Theta$ such that
		\begin{equation*}
			\left(T_{p-2}(M^{i,j}(\lambda))\right)_{p,.} = 
			\begin{pmatrix}
				0 & \cdots & 0 & e_{p-1} & e_p \\
			\end{pmatrix},
		\end{equation*}
		i.e. the only two possible non-zero elements of $\left(T_{p-2}(M^{i,j}(\lambda))\right)_{p,.}$ are the last two elements. Now we must have that
		\begin{equation*}
			\left(T_{p-1}(M^{i,j}(\lambda))\right)_{p,.} = 
			\begin{pmatrix}
				0 & \cdots & 0 \\
			\end{pmatrix},
		\end{equation*}
		indeed, since if this was not the case we would have that $\mathrm{rank}\left(T_{p-1}(M^{i,j}(\lambda))\right)=p$ (since we have all elements on the main diagonal are non-zero and all elements below it have been cleared), but $\mathrm{rank}\left(T_{p-1}(M^{i,j}(\lambda))\right)=\mathrm{rank}\left(M^{i,j}(\lambda)\right)=p$ since $\lambda\in \Lambda$.
		The fact that $T_{p-1}(M^{i,j}(\lambda))_{p,.}=0$ leads to the following equation.
		\begin{align}\label{Mp}
			&M^{i,j}(\lambda)_{p,.}=\frac{(M^{i,j}(\lambda))(p,1)}{(M^{i,j}(\lambda))(1,1)} M^{i,j}(\lambda)_{1,.}+\frac{T_1(M^{i,j}(\lambda))(p,2)}{T_1(M^{i,j}(\lambda))(2,2)} T_1(M^{i,j}(\lambda))_{2,.}+....
			\nonumber
			\\
			&+\frac{T_{p-2}(M^{i,j}(\lambda))(p,p-1)}{T_{p-2}(M^{i,j}(\lambda))(p-1,p-1)} T_{p-2}(M^{i,j}(\lambda))_{p-1,.}
		\end{align}
		$T_1(M^{i,j}(\lambda))_{2,.}$ is a linear combination of $(M^{i,j}(\lambda))_{1,.}$ and $(M^{i,j}(\lambda))_{2,.}$ with coefficients that are rational on $\Theta\times\R$. Similarly $T_v(M^{i,j}(\lambda))_{v,.}$ is a linear combination of $\{(M^{i,j}(\lambda))_{u,.}\}_{u=1}^v$, for $v=2,...,p-1$, with coefficients that are rational on $\R\times\Theta\times\Theta$. This together with \eqref{Mp} and the uniqueness of the representation \eqref{coeffrepreq} shows that the coefficients $\{s_u\}_{u=1}^{p-1}$ are rational on $\R\times\Theta\times\Theta$.
	\end{proof}
	
	\begin{lemma}\label{coefficientslemma}
		Suppose $A$ is a $p\times p$ matrix of rank $p-1$ such that $A_{p,.}$ is linearly independent of every subset of $p-2$ other rows of $A$. Let $A_{p,.}=\sum_{u=1}^{p-1}c_uA_{u,.}$ be its unique decomposition in $span\{A_{1,.},...,A_{p-1,.}\}$. If $\{A(n)\}_n$ is a sequence of $p\times p$ matrices of rank $p-1$ that also share the property that $A_{p,.}(n)$ is linearly independent of every subset of $p-2$ other rows of $A(n)$ and such that $\lim_{n\to\infty}\n A-A(n)\n=0$ then if we let $A_{p,.}(n)=\sum_{u=1}^{p-1}c_u(n)A_{u,.}(n)$ be its unique decomposition in $span\{A_{1,.}(n),...,A_{p-1,.}(n)\}$   it follows that $c_u(n)\to c_u$
	\end{lemma}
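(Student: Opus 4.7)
The plan is to express the coefficient vector $c=(c_1,\dots,c_{p-1})$ as a continuous function of the entries of $A$ in a neighborhood of the given matrix, and then invoke this continuity to obtain the convergence $c(n)\to c$. The main structural observation I would exploit is that the hypothesis of \emph{uniqueness} of the decomposition $A_{p,.}=\sum_u c_u A_{u,.}$ forces the first $p-1$ rows $A_{1,.},\dots,A_{p-1,.}$ to be linearly independent: otherwise any element of their null-space could be added to $c$, contradicting uniqueness. Combined with $\mathrm{rank}(A)=p-1$, this means the $(p-1)\times p$ submatrix $A_{1:p-1,\, .}$ has rank $p-1$, so there exists an index set $J\subset\{1,\dots,p\}$ with $|J|=p-1$ such that the $(p-1)\times(p-1)$ minor $A_{1:p-1,\,J}$ is invertible.

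Having fixed such a $J$, the next step is to restrict the row-vector identity $A_{p,.}=c\cdot A_{1:p-1,\,.}$ to the columns in $J$, obtaining $A_{p,J}=c\cdot A_{1:p-1,\,J}$, which yields the explicit closed form
\begin{equation*}
	c = A_{p,J}\cdot \bigl(A_{1:p-1,\,J}\bigr)^{-1}.
\end{equation*}
The same reasoning applies verbatim to every $A(n)$ in the sequence (they satisfy the identical structural hypotheses), so the coefficients for $A(n)$ are given by
\begin{equation*}
	c(n) = A(n)_{p,J}\cdot \bigl(A(n)_{1:p-1,\,J}\bigr)^{-1},
\end{equation*}
\emph{provided} the same minor $A(n)_{1:p-1,\,J}$ is invertible for that $n$.

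To ensure this last point, I would use that entrywise convergence $A(n)\to A$ yields $\det\bigl(A(n)_{1:p-1,\,J}\bigr)\to\det\bigl(A_{1:p-1,\,J}\bigr)\neq 0$ by continuity of the determinant, so $A(n)_{1:p-1,\,J}$ is invertible for all sufficiently large $n$. From there, the continuity of matrix inversion on the open set of invertible matrices gives $\bigl(A(n)_{1:p-1,\,J}\bigr)^{-1}\to \bigl(A_{1:p-1,\,J}\bigr)^{-1}$, and combined with $A(n)_{p,J}\to A_{p,J}$ this delivers $c(n)\to c$ coordinate by coordinate.

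I do not anticipate a genuine obstacle: the only subtlety is the two-step observation that (i) uniqueness of the decomposition is really an independence statement on the first $p-1$ rows, which in turn (ii) guarantees the existence of a column set $J$ for which the relevant $(p-1)\times(p-1)$ minor is nonzero and stays nonzero along the sequence. Once this is in place, the closed-form expression above reduces the convergence of $c(n)$ to the standard continuity properties of determinants and matrix inversion.
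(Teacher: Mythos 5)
Your proof is correct, and it takes a genuinely different route from the paper's. The paper stacks the first $p-1$ rows of $A$ into a $p\times(p-1)$ matrix $V$ of full column rank, expresses $c$ via the Moore--Penrose pseudoinverse as $c=V^+A_{p,.}$, invokes continuity of the pseudoinverse along a sequence of matrices of \emph{constant} rank to bound $\{c_u(n)\}$, and then finishes with a subsequence-and-contradiction argument against uniqueness. You instead observe that uniqueness (equivalently, the stated independence hypothesis combined with $\mathrm{rank}(A)=p-1$) forces the first $p-1$ rows to be independent, pick a column set $J$ for which the $(p-1)\times(p-1)$ minor $A_{1:p-1,J}$ is nonsingular, and write $c = A_{p,J}\,(A_{1:p-1,J})^{-1}$; convergence is then immediate from continuity of the determinant and of matrix inversion on the open set of invertible matrices, applied to a single fixed $J$. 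Your route is more elementary and sidesteps the delicate point that the pseudoinverse is discontinuous in general and only continuous on fixed-rank strata, a fact the paper quietly relies on; it also yields $c(n)\to c$ directly without needing the boundedness-plus-subsequence detour. The only thing worth stating explicitly in your write-up is the short argument that the hypothesis on $A_{p,.}$ being independent of every $(p-2)$-subset of other rows, together with $\mathrm{rank}(A)=p-1$, forces independence of $A_{1,.},\dots,A_{p-1,.}$ (since otherwise $A_{p,.}$ would be a combination of only $p-2$ of them, contradicting the hypothesis), but this is exactly the observation you flag, so no genuine gap remains.
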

	\begin{proof}
		Let $x=A_{p,.}$ and $V=\left(A_{1,.}^T A_{2,.}^T ... A_{p-1,.}^T\right)$ (which is then a $p\times (p-1)$ matrix) so that $x=VC$ if we let $C=\left(c_1 ... c_{p-1} \right)^T$. Note that $V$ has full column rank which implies $C=V^+x$ is a unique solution for $C$, where $V^+$ denotes the Penrose-inverse. This implies that $ \n C \n_\infty\le \n V^+\n_\infty \n A_{p,.}\n_\infty$. Similarly we get that $ \n C(n) \n_\infty\le \n V(n)^+\n_\infty \n A_{p,.}(n)\n_\infty$ where $V(n)^+$ is the Penrose inverse of the matrix $V(n)=\left(A_{1,.}(n)^T A_{2,.}(n)^T ... A_{p-1,.}(n)^T\right)$ and $C(n)=\left(c_1(n) ... c_{p-1}(n)\right)$. Since all the matrices $\{V(n)\}_n$ have rank $p-1$ and $\n V(n)-V \n_\infty$ by the law of large numbers it follows that $\n V(n)^+- V^+\n_\infty\xrightarrow{a.s.}0$ which implies $\n V(n)^+\n_\infty \xrightarrow{a.s.}\n V^+\n_\infty$. By the law of large numbers we also have that $\n A_{p,.}(n) \n_\infty\xrightarrow{a.s.}\n A_{p,.}\n_\infty$. All of this implies that the sequences $\{c_u(n)\}_n$ are bounded for $u=1,...,p-1$. Suppose now that for some $1\le m< p$, $c_m(n)\not\to c_m$. Then since the sequence $\{c_m(n)\}_n$ is bounded there exists a subsequence $\{c_m(n_k)\}_k$ such that $c_m(n_k)\to c'_m\not=c_u$ and $c_v(n_k)\to c'_v$, $1\le v\le p-1$. Since $A_{v,.}(n)\to A_{v,.}$, $1\le v\le p$ it follows that $A_{p,.}=\sum_{u=1}^{p-1}c'_uA_{u,.}$ but $c'_u\not=c_u$ this contradicts the uniqueness of the decomposition and this gives us the result.
	\end{proof}
	We are now ready to prove the main result of the paper.
	\begin{proof}[of Theorem \ref{optimalregthm}]~~\\
		\subsection*{Part 1: Existence and uniqueness of the minimizer.}
		Regardless of how the weights $w^*(\beta)$ (as long as it is in accordance with the definition of $w^*$ in Proposition \ref{optimalsol}) are distributed among the the risk functions, we have that
		$$R_+^{w^*(\beta)}(\beta) +\gamma R_\Delta^{w^*(\beta)}(\beta)=(1+\tau)\left(R_{A_1}(\beta)\vee...\vee R_{A_k}(\beta)\right)-\tau R_O(\beta).$$
		It will be helpful to re-write the minimizer in the following way
		\begin{align}\label{solution}
			\beta_\gamma&=\arg\min_{\beta\in\R^p}(1+\tau)\left(R_{A_1}(\beta)\vee...\vee R_{A_k}(\beta)\right)-\tau R_O(\beta)\nonumber
			\\
			&=\arg\min_{\beta\in\R^p}\left((1+\tau)R_{A_1}(\beta)-\tau R_O(\beta)\right)\vee...\vee \left((1+\tau)R_{A_k}(\beta)-\tau R_O(\beta)\right).
		\end{align}
		Let $h_i(\beta)=(1+\tau)R_{A_i}(\beta)-\tau R_O(\beta)$ so that $\beta_\gamma=\arg\min_{\beta\in\R}h_1(\beta)\vee...\vee h_k(\beta)$. We also let $f(\beta)=h_1(\beta)\vee...\vee h_k(\beta)$. Note that
		\begin{align*}
			G^i_\Delta
			&=
			\E\left[\left((I-B)^{-1}_{1:p,\textbf{.}}\beta \left(A_i+\epsilon_{X^{A_i}} \right)\right)^T(I-B)^{-1}_{1:p,\textbf{.}}\beta \left(A_i+\epsilon_{X^{A_i}} \right)\right]
			-
			\E\left[\left((I-B)^{-1}_{1:p,\textbf{.}}\beta \epsilon_{X^{O}} \right)^T(I-B)^{-1}_{1:p,\textbf{.}}\beta \epsilon_{X^{O}} \right]
			\\
			&=
			\E\left[\left((I-B)^{-1}_{1:p,\textbf{.}}\beta A_i \right)^T(I-B)^{-1}_{1:p,\textbf{.}}\beta A_i \right]
		\end{align*}
		which implies that $G^i_\Delta$ is positive semi-definite. When $G^i_\Delta$ is full-rank it is also positive definite. This is true if $\det\left(G^i_\Delta\right)\not=0$, but $\det\left(G^i_\Delta\right)$ is a polynomial on $\Theta$ and hence, by Lemma \ref{PolyLemma}, it is zero only on a set of Lebesgue measure zero in $\Theta$. Thus $G^i_\Delta$ is positive definite outside a set of Lebesgue measure zero in $\Theta$, which implies that $h_i$ is strictly convex. The maximum of of strictly convex functions is again strictly convex and due to the fact that a strictly convex function has at most one minimizer, we have uniqueness of the $\arg\min$ whenever it exists. 
		
		We now show that such a minimum exists. The minimum of $f$ can only be achieved at either an inflexion point for some $h_i$ or along the intersection (where $f$ might not be differentiable) of two $h_i$'s. The inflexion point of $h_i(\beta)$ is achieved at $\beta_{inf(i)}:=\left(G^i_++\gamma G^i_\Delta\right)^{-1}\left(Z^i_++\gamma Z^i_\Delta\right)$ (this is readily verified by solving $\nabla h_i(\beta)=0$), whenever $G^i_++\gamma G^i_\Delta$ is full rank. Since $G_\Delta \preceq G_+$, if $G_\Delta$ then so is $G_+$. Hence, if $G_\Delta$ is full rank then so is $G^i_++\gamma G^i_\Delta$ (and hence invertible). We now study any potential minima along the intersection of two risk functions $R_i(\beta)$ and $R_j(\beta)$. 
		Expanding the risk function we see that
		$$R_{A_i}(\beta)=\E\left[(Y^{A_i})^2\right]-2\beta\E\left[Y^{A_i}X^{A_i}\right]+\beta G^i\beta^T. $$
		Since $G^i$ is positive definite outside a set of measure zero, by a diagonalization of $G^i$, $G^i=ODO^T$, where $O$ is an orthogonal matrix and $D$ a diagonal matrix with positive eigenvalues $\lambda_1,...,\lambda_p$ such that $\n \beta O \n=\n\beta\n$. Letting $\tilde{\beta}=\beta O$ (recall that we defined $\beta$ as a row vector and note that $\tilde{\beta}O^T=\beta $), we have that 
		$$\beta G^i\beta^T=\beta ODO^T\beta^T=\tilde{\beta}D\tilde{\beta}^T=\sum_{u=1}^p\lambda_u\tilde{\beta}_u^2.$$
		So if we let $\tilde{\lambda}=\min_{1\le u\le p}\lambda_u$ then
		\begin{align*}
			R_{A_i}(\beta)
			&=
			\E\left[(Y^{A_i})^2\right]-2\beta\E\left[Y^{A_i}X^{A_i}\right]+\beta G^i\beta^T
			=
			\E\left[(Y^{A_i})^2\right]-2\langle\tilde{\beta}O^T,\E\left[Y^{A_i}X^{A_i}\right]\rangle+\sum_{u=1}^p\lambda_u\tilde{\beta}_u^2
			\\
			&\ge \E\left[(Y^{A_i})^2\right]-2\n \tilde{\beta}O^T\n_2\n \E\left[ Y^{A_i}X^{A_i}\right]\n_2+\tilde{\lambda}\n \tilde{\beta}\n_2^2
			\ge
			\E\left[(Y^{A_i})^2\right]-2\n \tilde{\beta}O^T\n_2 \E\left[ \n Y^{A_i}X^{A_i}\n_2\right]+\tilde{\lambda}\n \tilde{\beta}\n_2^2
			\\
			&=\E\left[(Y^{A_i})^2\right] + \n\beta\n_2\left(\tilde{\lambda}\n\beta\n_2-2\E\left[ \n Y^{A_i}\n_2\n X^{A_i}\n_2\right]\right),
		\end{align*}
		where we used Cauchy-Schwarz and Jensen's inequality. This implies that $\liminf_{\n \beta \n\to\infty}R_{A_i}(\beta)=\infty$. Since 
		\\$\inf_{\beta\in\R^p:R_{A_i}(\beta)=R_{A_j}(\beta)} R_{A_i}(\beta)$
		exists and is finite it cannot be a limit where $\n\beta\n\to\infty$ (the risk functions must obviously be finite at any intersection point). Therefore there exists a sequence $\{\beta_l\}_l$ with $\n \beta_l \n\le C$ for some $C<\infty$ such that $\liminf_{\beta\in\R^p:R_{A_i}(\beta)=R_{A_j}(\beta)} R_{A_i}(\beta)=\lim_{l\to\infty}R_{A_i}(\beta_l)$, so in other words
		$$\liminf_{\beta\in\R^p:R_{A_i}(\beta)=R_{A_j}(\beta)} R_{A_i}(\beta)=\liminf_{\beta\in\R^p:R_{A_i}(\beta)=R_{A_j}(\beta), \n \beta\n\le C} R_{A_i}(\beta).$$
		The set $\left\{\beta\in\R^p:R_{A_i}(\beta)=R_{A_j}(\beta)\right\}$ is closed since it is the kernel of the continuous function $R_{A_i}(\beta)-R_{A_j}(\beta)$. Therefore the set $\left\{\beta\in\R^p:R_{A_i}(\beta)=R_{A_j}(\beta), \n \beta\n\le C)\right\}$ is compact but the infinum of a continuous function over a compact set is in fact a minimum. To conclude, if $h_i$ and $h_j$ intersect for some $i\not=j$ then there is a minimum attained along this intersection, which then implies that that \eqref{solution} has a unique solution.
		
		~~\\
		\subsection*{Part 2: Structure of the solution and the estimator.}
		~~\\
		\textit{Step 1: Find and organize all possible candidates for the minimizer:}
		\\
		The plug-in estimator for the inflexion points of $h_i$ is given by $\left(\mathbb{G}^i_++\gamma \mathbb{G}^i_\Delta\right)^{-1}\left(\mathbb{Z}^i_++\gamma \mathbb{Z}^i_\Delta\right)$, by part 1 of this proof the inverse in the population version of this expression is well-defined outside a set of measure zero in $\Theta$. Therefore it follows by continuity and the law of large numbers that this estimator is consistent in the almost sure sense. Let $B_{inf}=\{\beta_{inf(1)},...,\beta_{inf(k)}\}$ denote the set of inflexion points for the different $h_i$s and let $\hat{B}_{inf}(\textbf{n})$ denote the corresponding (plug-in) estimators. Note that $h_i$ and $h_j$ intersect if and only if $R_i$ and $R_j$ intersect. If $R_i$ and $R_j$ intersect let $B^{i,j}$ be its set of argmin points. By part 1 of this proof, this set is either empty or just a singleton. Correspondingly, if $\hat{R}_i$ and $\hat{R}_j$ intersect we let $\hat{B}^{i,j}(\textbf{n})$ denote the set of argmin points along this intersection (this set may contain more than one point!). If $\hat{R}_i$ and $\hat{R}_j$ do not intersect we set $\hat{B}^{i,j}(\textbf{n})=\emptyset$. Let $B_{int}=B^{1,1}\cup...\cup B^{1,k}\cup B^{2,3}\cup...\cup B^{2,k}\cup...\cup B^{k-1,k}\cup B^{k,k}$ denote all the intersections points between the different $h_i$s and let $\hat{B}_{int}(\textbf{n})$ denote the corresponding set of estimators. We also let $B=B_{int}\cup B_{inf}$ and $\hat{B}(\textbf{n})=\hat{B}_{int}(\textbf{n})\cup \hat{B}_{inf}(\textbf{n})$. For $\beta\in B^{i,j}$ to be a candidate for the $\arg\min$ we must have that $h_i(\beta)=f(\beta)$. Let $\hat{h}_i(\beta)=(1+\tau)\mathbb{R}_{A_i}-\tau \mathbb{R}_O$ and $\hat{f}(\beta)=\bigwedge_{i=1}^k\hat{h}_i(\beta)$. It is straight-forward to verify that $\arg\min \hat{h}_i(\beta)$ is achieved at its inflexion point $\beta^i=\left(\mathbb{G}^i_++\gamma\mathbb{G}^i_\Delta\right)^{-1}\left(\mathbb{Z}^i_++\gamma\mathbb{Z}^i_\Delta\right)$. With our new notation, the argmin of $f$ must be achieved at a point in either $\hat{B}_{inf}(\textbf{n})$ or $\hat{B}_{int}(\textbf{n})$, i.e.
		\begin{align}
			&\beta_\gamma=\arg\min_{\beta\in B_{inf} \cup  B_{int}\cap\R\cap\{\beta:\hspace{1mm} \exists 1\le i \le k,\hspace{1mm} f(\beta)=h_i(\beta)\}}f(\beta),
		\end{align}
		while
		\begin{align}\label{betaeqemp}
			&\hat{\beta}_\gamma(\textbf{n})=\arg\min_{\beta\in \hat{B}_{inf}(\textbf{n}) \cup \hat{B}_{int}(\textbf{n})\cap\R\cap\{\beta:\hspace{1mm} \exists 1\le i \le k, \hat{f}(\beta)= \hat{h}_i(\beta)\}} \hat{f}(\beta) .
		\end{align}
		We now construct the vector $V$ as follows, the first $k$ entries are $\beta_{inf(1)},...,\beta_{inf(k)}$ in chronological order. Then we place all the elements of $B^{i,j}$'s in rising order beginning with $B^{0,1}$ and ending with $B^{k-1,k}$ with the convention that we always place elements of $B^{i,j}$ for $i<j$ but not $i\ge j$. The individual elements of $B^{i,j}$ are to be ordered lexicographically. This way we can associate each argmin- candidate point with a unique index number in $V$. The number of elements in $V$ will be denoted $M$. We then construct $\hat{V}_n$ completely analogously from the corresponding estimators and let $\hat{M}^{i,j}(\textbf{n})$ be the number of elements in $\hat{V}_{\textbf{n}}$. We will later show that for large $\textbf{n}$, $\hat{M}^{i,j}(\textbf{n})=M$ a.s.. Let us define the optimal choice indices
		$$L=\arg\min_{1\le l \le M}f(V(l)), $$
		(note that $L$ is a set in general) so that $\beta_\gamma= g(V(L))$
		and similarly let 
		$$\hat{L}_{\textbf{n}}=\arg\min_{1\le l \le \hat{\mathcal{M}}(\textbf{n})}\hat{f}(\hat{V}_{\textbf{n}}(l), $$
		so that $\hat{\beta}_\gamma(\textbf{n})= \hat{f}(\hat{V}_{\textbf{n}}(l)), \forall l\in \hat{L}_{\textbf{n}}.$
		It may be the case that for some $\textbf{n}$, $\hat{L}_{\textbf{n}}\not\in L$. This can only happen in two different ways. The first one is that we do not have enough samples and the estimators deviate so much from their respective estimands that we make the wrong choice. The second one is due to $\mathbb{R}_i$ and $\mathbb{R}_j$ (for some $i$ and $j$) intersecting and inducing an argmin while $R_i$ and $R_j$ do not intersect. If $R_i$ and $R_j$ do not intersect they are separated by some fixed amount, for large enough $\textbf{n}$ so will $\mathbb{R}_i$ and $\mathbb{R}_j$ be, therefore this will not happen for large $\textbf{n}$. For any $l\not\in L$ there exists $a>0$ such that $V(l)-V(u)>a$, for any $u\in L$. We will show later that $\hat{V}_{\textbf{n}}(l)\xrightarrow{a.s.}V(l)$ for $1\le l\le M$ (recall that $\hat{M}^{i,j}_n=M$ for large $\textbf{n}$) and this will imply that $\hat{V}_{\textbf{n}}(l)-\hat{V}_{\textbf{n}}(u)>a$ for large enough $\textbf{n}$, $u\in L$. So for such large $\textbf{n}$ we must have $\hat{L}_{\textbf{n}}\in L$. So for large $\textbf{n}$ the right candidate is always picked and the theorem will follow if we can show that $\hat{V}_{\textbf{n}}(l)\xrightarrow{a.s.}V(l)$ for all $1\le l\le M$. This is clear by Theorem \ref{converge} for the inflexion points so it now remains to study possible minima along the intersections, i.e. we must show that with a lexicographical ordering of $B^{i,j}$, $\beta^{i,j}_1<...<\beta^{i,j}_q$ and of $\hat{B}^{i,j}(\textbf{n})$, $\hat{\beta}_1^{i,j}<...<\hat{\beta}_{\hat{q}}^{i,j}$ (with $\hat{q}=q$ for large $\textbf{n}$) we have that $\hat{\beta}^{i,j}_u\xrightarrow{a.s.}\beta^{i,j}_u$ for $u=1,...q$. 
		
		~~~\\
		\noindent
		\textit{Step 2: Use Lagrange multipliers to find possible minima along intersections of the risk functions.} 
		~~\\
		We now consider the corresponding empirical case, i.e. we assume that the two estimators $\hat{R}_i$ and $\hat{R}_j$ intersect. By continuity and the law of large numbers it follows that for large $\textbf{n}$, the matrix $\hat{G}_i$ is also positive definite and we may consider the corresponding diagonalization of $\hat{G}_i=\hat{O}\hat{D}\hat{O}^T$. If we let $\hat{\tilde{\lambda}}$ denote the smallest eigenvalue of $\hat{D}$. We can then show analogously to the population case that
		$$\hat{R}_{A_i}(\beta)\ge \frac{1}{n} \sum_{l=1}^{n_i}(Y^{A_i}_{l})^2+\n\beta\n_2\left(\hat{\tilde{\lambda}}\n\beta\n_2-2\sqrt{\frac{1}{n} \sum_{l=1}^{n_i}|Y^{A_i}_{l} |^2}\sqrt{\frac{1}{n} \sum_{l=1}^{n_i}\n X^{A_i}_{l}\n_2^2}\right).$$
		Therefore we may draw the same conclusion as before, namely if $\hat{R}_{A_i}$ and $\hat{R}_{A_j}$ intersect for large $\textbf{n}$ then we have an $\arg\min$ along this intersection. 
		Let $g(\beta)=R_{A_i}(\beta)-R_{A_j}(\beta)$. By the necessity part of the Kuhn-Tucker Theorem, a necessary condition for $\beta^*$ to be a minimum point for $R_{A_i}(\beta)$ subject to $g(\beta)=0$ is that $\nabla_\beta \mathcal{L}(\beta^*,\lambda^*)=0$ for some $\lambda^*$ and $g(\beta^*)=0$. This will only have a finite set of solutions in terms of $\lambda^*$ (as it will be polynomial in $\lambda^*$) and correspondingly a finite set of $\beta$'s, we then choose whichever one that minimizes $R_{A_i}$. Consider the Lagrangian
		\begin{align*}
			\mathcal{L}(\beta,\lambda)&=R_{A_i}(\beta)-\lambda g(\beta)
			=
			\sum_{l=0}^p\beta_l^2\left(\E\left[(X(l)^{A_i})^2-\lambda\left( (X(l)^{A_i})^2-(X(l)^{A_j})^2\right)\right] \right)
			\\
			&-2\sum_{l=1}^p\beta_l\E\left[Y^{A_i}(X(l)^{A_i})-\lambda\left(Y^{A_i}(X(l)^{A_i})-Y^{A_j}(X(l)^{A_j})\right)\right]
			\\
			&+2\sum_{l_1=0}^p\sum_{l_2\not=l_1}^p\beta_{l_1}\beta_{l_2}\E\left[X(l_1)^{A_i}X(l_2)^{A_i}-\lambda\left(X(l_1)^{A_i}X(l_2)^{A_i}-X(l_1)^{A_j}X(l_2)^{A_j}\right)\right]
			+
			\E\left[(Y^{A_i})^2-\lambda\left(Y^{A_i})^2-(Y^{A_j})^2\right)\right]
			\\
			&=\sum_{l=1}^p\beta_l^2\left(a_i(l)-\lambda\left( a_i(l)-a_j(l)\right) \right)
			-2\sum_{l=1}^p\beta_l\left(b_i(l)-\lambda\left(b_i(l)-b_j(l)\right)\right)
			\\
			&+2\sum_{l_1=0}^p\sum_{l_2\not=l_1}^p\beta_{l_1}\beta_{l_2}\left(a_i(l_1,l_2)-\lambda\left(a_i(l_1,l_2)-a_j(l_1,l_2)\right)\right)
			+c_i-\lambda\left(c_i-c_j\right).
		\end{align*}
		where $a_i(l_1,l_2)=\E\left[X(l_1)^{A_i}X(l_2)^{A_i}\right]$.
		Setting $\nabla_\beta\mathcal{L}(\beta,\lambda)=0$ yields,
		\begin{align}\label{LagrangeEq}
			&\beta_l\left(\E\left[(X(l)^{A_i})^2-\lambda\left( (X(l)^{A_i})^2-(X(l)^{A_j})^2\right)\right] \right)
			=
			\E\left[Y^{A_i}(X(l)^{A_i})-\lambda\left(Y^{A_i}(X(l)^{A_i})-Y^{A_j}(X(l)^{A_j})\right)\right]
			\nonumber
			\\
			&-\sum_{l_2\not=l}^p\beta_{l_2}\E\left[X(l)^{A_i}X(l_2)^{A_i}-\lambda\left(X(l)^{A_i}X(l_2)^{A_i}-X(l)^{A_j}X(l_2)^{A_j}\right)\right].
		\end{align}
		We define the $p\times p$ matrix $M^{i,j}(\lambda)$ and the vector $C^{i,j}(\lambda)$ exactly as in \ref{parammatrix} using the given environments i.e.
		$$M^{i,j}(\lambda)_{l,l} =a_i(l)-\lambda\left(a_i(l)-a_j(l)\right)$$
		when $u\not=v$
		$$M^{i,j}(\lambda)_{u,v} =a_i(u,v)-\lambda\left(a_i(u,v)-a_j(u,v)\right)$$
		and
		$$C^{i,j}(\lambda)_u=b_i(u)-\lambda\left(b_i(u)-b_j(v)\right).$$
		Define
		$$d_l(\lambda)=\E\left[(X(l)^{A_i})^2-\lambda\left( (X(l)^{A_i})^2-(X(l)^{A_j})^2\right)\right],$$ 
		$$e_l(\lambda)=\sum_{l_2\not=l}^p\beta_{l_2}\E\left[X(l)^{A_i}X(l_2)^{A_i}-\lambda\left(X(l)^{A_i}X(l_2)^{A_i}-X(l)^{A_j}X(l_2)^{A_j}\right)\right]\hspace{2mm} and$$
		$$c_l(\lambda)=\E\left[Y^{A_i}(X(l)^{A_i})-\lambda\left(Y^{A_i}(X(l)^{A_i})-Y^{A_j}(X(l)^{A_j})\right)\right].$$
		With this notation we can reformulate \eqref{LagrangeEq} as
		$$M^{i,j}(\lambda)\beta(\lambda)=C^{i,j}_{\lambda},$$ 
		where we write $\beta(\lambda)$ only to signify that $\beta$ depends on $\lambda$.
		\\
		\\ 
		\textit{Step 3: Only finitely many Lagrange multiplier candidates with corresponding solutions $\beta(\lambda)$ (outside a set of measure zero).}
		\\
		Let $P(\lambda)=\det\left(M^{i,j}(\lambda)\right)$. We will now establish the fact that outside a set of measure zero in $\Theta\times\Theta$, $M(\lambda_u)\beta=C^{i,j}(\lambda)$ has no solutions for any $\lambda\in\R$ such that $P(\lambda)=0$. By Lemma \ref{matrixLemma} outside a set of measure zero $N_2\in\Theta\times\Theta$, $\mathrm{rank}(M^{i,j}(\lambda))=p-1$ for $\lambda\in\Lambda$ and 
		\begin{align}\label{linearcomb}
			&M^{i,j}_{p,.}(\lambda)=\sum_{u=1}^{p-1}s_u(\theta,\lambda)M_{u,.}(\lambda)
		\end{align}
		with all $s_u(\theta,\lambda)\not=0$. 
		From Lemma \ref{matrixNoTiesLemma} we see that outside a zero set $N_3\in\Theta\times\Theta$, $\{s_u(\theta,\lambda)\}_{u=1}^{p-1}$ are all rational functions on $\R\times\Theta\times\Theta$. Applying the same row operations to the vector $C^{i,j}(\lambda)$ as $M^{i,j}(\lambda)$ we now have that $M^{i,j}(\lambda)\beta=C^{i,j}(\lambda)$ has no solutions if
		$$\left(C^{i,j}_{\lambda}(p)-\sum_{v\not=p}s_v(\theta,\lambda) C^{i,j}_{\lambda}(v)\right)\not=0.$$
		Let $G$ denote the lowest common denominator of the terms in $\left(C^{i,j}_{\lambda}(p)-\sum_{v\not=p}s_v(\theta,\lambda) C^{i,j}_{\lambda}(v)\right)$ then 
		$$Q(\theta,\lambda)= G(\theta,\lambda)\left(C^{i,j}_{\lambda}(p)-\sum_{v\not=p}s_v(\theta,\lambda) C^{i,j}_{\lambda}(v)\right)$$ 
		is a polynomial on $\R\times\Theta\times\Theta$. To show that $Q$ has no roots in $\Lambda$ is equivalent to showing that $P$ and $Q$ have no common roots which is in turn equivalent to showing that their resultant is not zero. Again, we need to show that there exists $\theta\in\Theta$ such that $P$ and $Q$ have no common roots. Take the matrix $\tilde{M}$ from the proof of Lemma \ref{matrixNoTiesLemma}. As we already have seen for $\tilde{M}$, $s_1\equiv...\equiv s_{p-1}\equiv 1$. We readily see that in this case $G\equiv 1$ (since $G$ is the lowest common denominator of $\{s_u\}_{u=1}^{p-1}$) so any choice of parameters that leads to the first degree polynomial $C^{i,j}_{\lambda}(p)-\sum_{v\not=p}s_v(\theta,\lambda) C^{i,j}_{\lambda}(v)$ not having a root in $\lambda=0$ shows that outside a zero set $N_4$ there are no solutions to $M^{i,j}(\lambda_u)\beta=C^{i,j}(\lambda)$ for any $\lambda\in\Lambda$. 
		\\
		\\
		\textit{Step 4: For all viable candidates $\beta(\lambda)$, find the roots of $g(\beta(\lambda))=0$ and show there are only finitely many candidates.}
		\\
		We continue with the case when $\lambda\in\R$ is such that $M^{i,j}(\lambda)$ is full rank. If $M^{i,j}(\lambda)$ is full-rank then $\beta(\lambda)=(M^{i,j})^{-1}(\lambda)C^{i,j}_{\lambda}$, to find $\lambda$ we solve $g(\beta(\lambda))=0$. Note that 
		$$\beta(\lambda)_u=\frac{1}{\det\left(M^{i,j}(\lambda)\right)}\sum_{v=1}^p\det\left(M^{i,j}_{u,v}(\lambda)\right)C^{i,j}(\lambda)(v),$$ 
		where we again use the convention that $M^{i,j}_{u,v}$ is the matrix $M^{i,j}$ with row $u$ and column $v$ removed. So for $\lambda\not\in\Lambda$
		\small
		\begin{align*}
			&g(\beta(\lambda))=\left(a_i(l)-a_j(l)\right)\sum_{l=1}^p\left(\frac{1}{\det\left(M^{i,j}(\lambda)\right)}\sum_{v=1}^p\det\left(M^{i,j}_{l,v}(\lambda)\right)(-1)^{l+v}C^{i,j}(\lambda)(v)\right)^2+c_i-c_j-
			\\
			&2\left(b_i(l)-b_j(l)\right)\sum_{l=1}^p\left(\frac{1}{\det\left(M^{i,j}(\lambda)\right)}\sum_{v=1}^p\det\left(M^{i,j}_{l,v}(\lambda)\right)(-1)^{l+v}C^{i,j}(\lambda)(v)\right)+
			\\
			&2\left(a_i(l_1,l_2)-a_j(l_1,l_2)\right)\sum_{l_1=1}^p\sum_{l_2\not=l_2}\left(\frac{1}{\det\left(M^{i,j}(\lambda)\right)}\sum_{v=1}^p\det\left(M^{i,j}_{l_1,v}(\lambda)\right)(-1)^{l_1+v}C^{i,j}(\lambda)(v)\right)\cdot
			\\
			&\left(\frac{1}{\det\left(M^{i,j}(\lambda)\right)}\sum_{v=1}^p\det\left(M_{l_2,v}(\lambda)\right)(-1)^{l_2+v}C^{i,j}(\lambda)(v)\right).
		\end{align*}
		\normalsize
		Furthermore from the above expression we see that for $\lambda\not\in\Lambda$, $g(\beta(\lambda))$ has the same roots as 
		$$\tilde{P}(\lambda)=\det\left(M^{i,j}(\lambda)\right)^2g(\beta(\lambda)),$$ 
		which is a polynomial. Outside a set of measure zero $N_5\in\Theta\times\Theta$, $\tilde{P}(\lambda)$ has only simple roots, according to Lemma \ref{discriminantLemma}. Let us now define $N=N_1\cup...\cup N_5$. Letting $\mathcal{R}$ denote the (finite) set of roots to $g(\beta(\lambda))=0$ then the set of intersection $\arg\min$'s will now be given by 
		$$B^{i,j}=\left\{M^{-1}(\lambda^{i,j})C_{\lambda^{i,j}}: \lambda^{i,j}=\arg\min_{\lambda\in\mathcal{R}}R_{A_i}\left(M^{-1}(\lambda)C^{i,j}(\lambda)\right)\right\}.$$
		\\
		\textit{Step 5: Plug-in estimator for candidate points along intersections converge to corresponding points in population case.}
		\\
		From here on out we fix $\theta\in\Theta\times\Theta\setminus N$. We now consider the plug-in estimator of the minima along the intersection between $R_{A_i}$ and $R_{A_j}$,
		$$\arg\min_{\beta\in\R^p} \hat{R}_{A_i}(\beta),\texttt{ subject to } \hat{R}_{A_i}(\beta)=\hat{R}_{A_j}(\beta).$$
		We also consider the plug-in Lagrangian
		\begin{align*}
			&\left(\hat{\mathcal{L}}(\beta,\lambda)\right)(\textbf{n})=\left(\hat{R}_{A_i}(\beta)\right)(\textbf{n})-\lambda \hat{g}(\beta)
			=
			\sum_{l=1}^p\beta_l^2\left(-\lambda\left( \hat{a}_i(l)-\hat{a}_j(l)\right) \right)
			-2\sum_{l=1}^p\beta_l\left(\hat{b}_i(l)-\lambda\left(\hat{b}_i(l)-\hat{b}_j(l)\right)\right)
			\\
			&2\sum_{l_1=0}^p\sum_{l_2\not=l_1}^p\beta_{l_1}\beta_{l_2}\left(\hat{a}_i(l_1,l_2)-\lambda\left(\hat{a}_i(l_1,l_2)-\hat{a}_j(l_1,l_2)\right)\right)
			+\hat{c}_i-\lambda\left(\hat{c}_i-\hat{c}_j\right),
		\end{align*}
		where $\hat{g}(\beta)=\hat{R}_{A_i}(\beta)-\hat{R}_{A_j}(\beta)$, $\hat{a}_i(l)=\frac{1}{n_{A_i}}\sum_{u=1}^{n_{A_i}}\left(X^{A_i}_u(l)\right)^2$, $\hat{b}_i(l)=\frac{1}{n_{A_i}}\sum_{u=1}^{n_{A_i}}X^{A_i}_u(l)Y^{A_i}$, $\hat{c}_i=\frac{1}{n_{A_i}}\sum_{u=1}^{n_{A_i}}\left(Y^{A_i}\right)^2$, $\hat{a}_i(l_1,l_2)=\frac{1}{n_{A_i}}\sum_{u=1}^{n_{A_i}}X^{A_i}_u(l_1)X^{A_i}(l_2)$. Then similarly we see that solving $\nabla_\beta \left(\hat{\mathcal{L}}(\beta,\lambda)\right)(\textbf{n})=0$ leads to $\widehat{M}^{i,j}(\lambda)\beta=\widehat{C}^{i,j}_\lambda$, where 
		$$\widehat{C}^{i,j}_u(\lambda)=\hat{b}_i(u)-\lambda\left(\hat{b}_i(u)-\hat{b}_j(v)\right), $$
		$$\widehat{M}^{i,j}_{l,l}(\lambda)=\hat{a}_i(l)-\lambda\left(\hat{a}_i(l)-\hat{a}_j(l)\right),$$
		and when $u\not=v$
		$$\widehat{M}^{i,j}(\lambda)_{u,v} =\hat{a}_i(u,v)-\lambda\left(\hat{a}_i(u,v)-\hat{a}_j(u,v)\right).$$
		Let $\hat{\Lambda}_n$ denote the set of roots (in terms of $\lambda)$ to $\hat{P}(\lambda)= \det\left(\hat{M}^{i,j}(\lambda)\right)=0$ and let $\hat{\Lambda}_\R(\textbf{n})$ denote the real roots of this polynomial. We wish to establish that for large enough $\textbf{n}$, $\hat{M}^{i,j}(\lambda)=\hat{C}^{i,j}_\lambda$ has no solutions for $\lambda\in\hat{\Lambda}_n$. Recall that $P$ only has simple roots which implies, by Lemma \ref{simpleroots} and the law of large numbers that the elements in $\hat{\Lambda}_\R(\textbf{n})$ converge a.s. to those in $\Lambda_\R$, where we let $\Lambda_\R=\{\lambda_1,...,\lambda_m\}$ (where $\lambda_1<...<\lambda_m$ and $m$ depends on $\theta$ but $\theta$ is fixed at this point) denote the real roots of $P$. Note that by the proof of \ref{matrixLemma}, for each $\lambda\in\Lambda$ (and therefore also in $\Lambda_\R$) the first $p-1$ first minors along the first column in $M^{i,j}(\lambda)$ is bounded below by some $\delta>0$ ($\delta>0$ also depends on $\theta$, but $\theta$ is fixed), i.e. $\left|\det\left(M_{u,1}(\lambda)\right)\right|>\delta$ for each $\lambda\in\Lambda$ and $1\le u\le p-1$. For large enough $\textbf{n}$, the number of elements in $\hat{\Lambda}_n$ and $\Lambda$ coincides by Lemma \ref{simpleroots}. From Lemma \ref{simpleroots} we also have $\hat{\lambda}_z\xrightarrow{a.s.}\lambda_z$, for $1\le z\le m$. Since the determinant is continuous it follows that $\left|\det\left(\hat{M}^{i,j}_{1,v}(\hat{\lambda}_z)\right)\right|>\delta$, $v=1,...,p-1$ for large enough $\textbf{n}$. Therefore $\mathsf{rank}\left( \hat{M}^{i,j}(\hat{\lambda}_z)\right)=p-1$ and we have the unique representation (in terms of the rows $\hat{M}^{i,j}_{u,.}(\lambda)$)
		\begin{align}\label{linearcombhat}
			&\hat{M}^{i,j}_{p,.}(\hat{\lambda}_z)=\sum_{u=1}^{p-1}\hat{s}_u(\hat{\lambda}_z)\hat{M}^{i,j}_{u,.}(\hat{\lambda}_z)
		\end{align}
		for all $\hat{\lambda}_z\in\hat{\Lambda}(\textbf{n})_\R$ with all $\hat{s}_u(\hat{\lambda}_z)\not=0$. Note that 
		\begin{align}\label{Mdiff}
			&\n \hat{M}^{i,j}(\hat{\lambda}_z)-M(\lambda_z) \n\le \n \hat{M}^{i,j}(\hat{\lambda}_z-\lambda_z) \n+\n\hat{M}^{i,j}(\lambda_z)-M(\lambda_z) \n,
		\end{align}
		for the first term above $\n \hat{M}^{i,j}(\hat{\lambda}_z-\lambda_z) \n=\left|\hat{\lambda}_z-\lambda_z\right|\n\hat{M}'(\textbf{n})\n$, where $\hat{M}'_{u,v}(\textbf{n})=\hat{w}_i(u,v)-\hat{w}_j(u,v)$ when $u\not=v$ and $\hat{M}'_{l,l}(\textbf{n})=\hat{a}_i(l)-\hat{a}_j(l)$. By the law of large numbers $\hat{M}'(\textbf{n})$ converges to the matrix $M'$ with $M'_{u,v}=a_i(u,v)-a_j(u,v)$ when $u\not=v$ and $M'_{l,l}=a_i(l)-a_j(l)$. Due to the fact that $\hat{\lambda}_z\xrightarrow{a.s.}\lambda_z$ it is therefore clear that the first term in \eqref{Mdiff} vanishes. The second therm in \eqref{Mdiff} will vanish by the law of large numbers, hence $\n \hat{M}^{i,j}(\hat{\lambda}_z)-M^{i,j}(\lambda_z) \n\xrightarrow{a.s.}0$.
		Due to the fact that $\n \hat{M}^{i,j}(\hat{\lambda}_z)-M^{i,j}(\lambda_z) \n\xrightarrow{a.s.}0$ and Lemma \ref{coefficientslemma} it follows that $\hat{s}_u(\hat{\lambda}_z)\xrightarrow{a.s.}s_u(\lambda_z)$ for $u=1,...,p-1$ and $1\le z\le m$. Let
		$$H(\lambda)= \frac{Q(\lambda)}{G(\lambda)} = \left(C^{i,j}_{p}(\lambda)-\sum_{v\not=p}s_v(\theta,\lambda) C^{i,j}_{v}(\lambda)\right),$$
		we know that since we chose $\theta\not\in N$, $|H(\lambda)|>\delta'$ (since $Q(\lambda)\not=0$) for some $\delta'>0$ and every $\lambda\in \Lambda(\theta)$. We now let 
		$$\hat{H}(\lambda)= \left(\hat{C}^{i,j}_{p}(\lambda)-\sum_{v\not=p}\hat{s}_v(\lambda) \hat{C}^{i,j}_{v}(\lambda)\right),$$ 
		it follows immediately that $\hat{H}(\hat{\lambda}_z)\xrightarrow{a.s.}H(\lambda_z)$ by the fact that $\hat{s}_u(\hat{\lambda}_z)\xrightarrow{a.s.}s_u(\lambda_z)$ and $\hat{C}^{i,j}_{\hat{\lambda}_z}(v)\xrightarrow{a.s.}C^{i,j}_{\lambda_z}(v)$ for $v=1,...p$ (this latter statement can be proved by employing the same strategy as for \eqref{Mdiff}). So for large $\textbf{n}$, $\hat{H}(\hat{\lambda}_z)>\delta$ which implies that there are no solutions to $\hat{M}^{i,j}(\lambda)\beta=\hat{C}^{i,j}(\lambda)$ for $\lambda\in \hat{\Lambda}_\R(\textbf{n})$ for large enough $\textbf{n}$. Let $\hat{\beta}(\lambda)=(\hat{M}^{i,j})^{-1}(\lambda)\hat{C}^{i,j}(\lambda)$ for $\lambda\not\in \hat{\Lambda}_n$. To find any candidates for $\hat{\lambda}$ we then solve $\hat{g}(\beta(\lambda))=0$. Similar to before we see that
		\small
		\begin{align}\label{groots}
			\hat{g}(\beta(\lambda))&=\sum_{l=1}^p\left(\frac{1}{\det\left(\hat{M}^{i,j}(\lambda)\right)}\sum_{v=1}^p\det\left(\hat{M}^{i,j}_{l,v}(\lambda)\right)(-1)^{l+v}\hat{C}^{i,j}_\lambda(v)\right)^2\left(\hat{a}_i(l)-\hat{a}_j(l)\right)+\hat{c}_i-\hat{c}_j\nonumber
			\\
			&-2\sum_{l=1}^p\left(\frac{1}{\det\left(\hat{M}^{i,j}(\lambda)\right)}\sum_{v=1}^p\det\left(\hat{M}^{i,j}_{l,v}(\lambda)\right)(-1)^{l+v}\hat{C}^{i,j}_\lambda(v)\right)\left(\hat{b}_i(l)-\hat{b}_j(l)\right)\nonumber
			\\
			&+2\left(\hat{a}_i(l_1,l_2)-\hat{a}_j(l_1,l_2)\right)\sum_{l_1=1}^p\sum_{l_2\not=l_2}\left(\frac{1}{\det\left(\hat{M}^{i,j}(\lambda)\right)}\sum_{v=1}^p\det\left(\hat{M}^{i,j}_{l_1,v}(\lambda)\right)(-1)^{l_1+v}\hat{C}^{i,j}_\lambda(v)\right)
			\nonumber
			\\
			&\times\left(\frac{1}{\det\left(\hat{M}^{i,j}(\lambda)\right)}\sum_{v=1}^p\det\left(\hat{M}^{i,j}_{l_2,v}(\lambda)\right)(-1)^{l_2+v}\hat{C}^{i,j}_\lambda(v)\right).
		\end{align}
		\normalsize
		Analogously to the population case, if let $\hat{\tilde{P}}(\lambda)=\det\left(\hat{M}^{i,j}(\lambda)\right)^2\hat{g}(\beta(\lambda))$ then $\hat{\tilde{P}}(\lambda)$ has the same roots as $\hat{g}(\beta(\lambda))$. By the law of large numbers and continuity we see that $\det\left(\hat{M}^{i,j}(\lambda)\right)\xrightarrow{a.s.}\det\left( M^{i,j}(\lambda)\right)\not=0$ (since $\lambda\not\in\Lambda$), $\det\left(\hat{M}^{i,j}_{u,v}(\lambda)\right)\xrightarrow{a.s.} \det\left(M^{i,j}_{u,v}(\lambda)\right)$ for all $u,v$, $\hat{b}_.(.)\xrightarrow{a.s.}b_.(.)$, $\hat{a}_.(.)\xrightarrow{a.s.}a_.(.)$, $\hat{a}_.(.,.)\xrightarrow{a.s.}a_.(.,.)$ and finally that $\hat{c}_.\xrightarrow{a.s.}c_.$. We can thus conclude that all coefficients of $\hat{\tilde{P}}$ converge to the corresponding coefficients of $\tilde{P}$ and therefore, by Lemma \ref{simpleroots}, the real roots of $\hat{\tilde{P}}$ converge to those of $\tilde{P}$ and that for large enough $\textbf{n}$, the number of such roots of $\tilde{P}$ and $\hat{\tilde{P}}$ coincides. Letting $\mathcal{R}=\{\lambda_1,...,\lambda_m\}$ (with $\lambda_1<...<\lambda_{\hat{m}}$) and $\hat{\mathcal{R}}=\{\hat{\lambda}_1,...,\hat{\lambda}_{\hat{m}}\}$ (with $\hat{\lambda}_1<...<\hat{\lambda}_{\hat{m}}$) denote the simple real roots of $\tilde{P}$ and $\hat{\tilde{P}}$ respectively we have that $\hat{\lambda}_u\xrightarrow{a.s.}\lambda_u$ for $u=1,..,m$ and for large $n$, $\hat{m}=m$, i.e. the elements of $\hat{\mathcal{R}}$ converge to those of $\mathcal{R}$. If we now define
		$$\hat{B}^{i,j}(\textbf{n})=\left\{\hat{M}^{-1}(\lambda^{i,j})\hat{C}^{i,j}(\lambda^{i,j}): \lambda^{i,j}=\arg\min_{\lambda\in\hat{\mathcal{R}} }\hat{R}_{A_i}\left((\hat{M}^{i,j})^{-1}(\lambda)\hat{C}^{i,j}_\lambda\right)\right\}. $$
		By the law of large numbers and continuity (since $\det\left(M^{i,j}(\lambda)\right)\not=0$ for $\lambda\in\Lambda$) $(\hat{M}^{i,j})^{-1}(\lambda)\hat{C}^{i,j}_{\lambda}\xrightarrow{a.s.}(M^{i,j})^{-1}(\lambda)C^{i,j}(\lambda)$ for $\lambda\not\in\Lambda$. Therefore $\max_{\beta\in\hat{B}^{i,j}}\dist\left(\beta,B^{i,j}\right)\xrightarrow{a.s.}0$ as well as $\#\hat{B}^{i,j}(\textbf{n})=\#B^{i,j}$ (with $\#A$ denoting the number of elements in the set $A$) for large $\textbf{n}$.
		With a lexicographical ordering of $B^{i,j}$, $\beta^{i,j}_1<...<\beta^{i,j}_q$ and of $\hat{B}^{i,j}(\textbf{n})$, $\hat{\beta}_1^{i,j}<...<\hat{\beta}_{\hat{q}}^{i,j}$ (with $\hat{q}=q$ for large $\textbf{n}$) we have that $\hat{\beta}^{i,j}_u\xrightarrow{a.s.}\beta^{i,j}_u$ for $u=1,...q$.
	\end{proof}



\end{document}